\newtheorem{theorem}{Theorem}[section]
\newtheorem{proposition}[theorem]{Proposition}
\newtheorem{corollary}[theorem]{Corollary}
\newtheorem{remark}[theorem]{Remark}
\newtheorem{definition}[theorem]{Definition}
\newtheorem{conjecture}[theorem]{Conjecture}
\numberwithin{equation}{section}
\begin{document}

\baselineskip=16pt

\title[Chow stability]{On Chow stability for algebraic curves}

\author{L. Brambila-Paz}

\address{CIMAT, Apdo. Postal 402, C.P. 36240. Guanajuato, Gto,
M\'exico}
\email{lebp@cimat.mx}

\author{H. Torres-L\'opez}

\address{CIMAT, Apdo. Postal 402, C.P. 36240. Guanajuato, Gto,
M\'exico} \email{hugo@cimat.mx}

\keywords{Chow stability, tangent bundle, stability, projective
varieties, Hilbert scheme}

\subjclass[2010]{14H60, 14H10, 14C05, 14D23}

\date{\today}

\begin{abstract} In the last decades there have been introduced different concepts of stability for projective varieties.
In this paper we give a natural and intrinsic criterion of the Chow, and Hilbert, stability for complex irreducible smooth projective curves $C\subset \mathbb P ^n$.
 Namely, if the restriction $T\mathbb P_{|C} ^n$ of the tangent bundle of $\mathbb P ^n$ to $C$ is stable then $C\subset \mathbb P ^n$ is Chow stable, and hence Hilbert stable. We apply this criterion to describe a smooth open set of the irreducible component $Hilb^{P(t),s}_{{Ch}}$ of the Hilbert scheme of $\mathbb{P} ^n$
containing the generic smooth Chow-stable curve of genus $g\geq 4$ and degree $d>g+n-\left\lfloor\frac{g}{n+1}\right\rfloor.$
Moreover, we describe the quotient stack of such curves. Similar results are obtained for the locus of Hilbert stable curves.
\end{abstract}

\maketitle

\section{Introduction}\label{intro}

 In \cite{mum} Mumford introduced the GIT notion of Chow stability
 (see Definition \ref{defchow}) giving projective moduli spaces of
projective varieties.  However, in general there
is no simple way to know when a variety is Chow stable, mainly,
because the Hilbert-Mumford criterion has not been successfully
simplified or interpreted for varieties of higher dimension.  Many
authors  have turned to other methods and have introduced different
concepts of stability  for producing moduli of varieties
(see e.g. \cite{alex}, \cite{ko1}, \cite{rich} ). Some of
them were defined with the aim to understand the relation of  the
algebro-geometric stability and the existence of special metrics.
It was R. Berman who,  in \cite{ber} (see also \cite{tian}), proved that a Fano
manifold admitting  a K\"ahler-Einstein metric is $K$-polystable.
The breakthrough result has been achieved recently by Xiu-Xiong
Chen, Simon Donalson and Song Sun in \cite{donsun}. They showed
that if a Fano manifold is $K$-stable then it admits a
K\"ahler-Einstein metric.
 For more information in this direction see \cite{donsun} and \cite{oda} and the bibliography therein.

In this paper, for complex irreducible  smooth curves, we prove in Section $2$

\begin{theorem}\label{teotan} Let $C \subset \mathbb{P}^n$ be a
complex irreducible smooth  curve. If the restriction $T\mathbb P_{|C} ^n$ of the
tangent bundle of $\mathbb P ^n$ to $C$ is semistable  then $C\subset
\mathbb P ^n$ is Chow semistable. Moreover, if $T\mathbb P_{|C} ^n$ is stable then
$C\subset \mathbb P ^n$ is Chow stable.
 \end{theorem}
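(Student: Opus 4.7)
The plan is to verify Chow (semi)stability via the Hilbert--Mumford numerical criterion. The key reduction is that for every non-trivial $1$-parameter subgroup $\lambda$ of $SL(V)$ with $V=\mathbb{C}^{n+1}$ and $\mathbb{P}^n=\mathbb{P}(V)$, the Chow weight $\mu^{\mathrm{Chow}}(C,\lambda)$ can be bounded below using Mumford's weight formula \cite{mum} in terms of the flag
\[
0=U_0\subsetneq U_1\subsetneq\cdots\subsetneq U_s=V
\]
diagonalizing $\lambda$, together with the scheme-theoretic incidence numbers $\deg(\mathbb{P}(U_i)\cap C)$. The sign inequality $\mu^{\mathrm{Chow}}\,(>)\,\ge 0$ will follow, term by term, from showing for every proper $U\subsetneq V$ the inequality
\[
n\cdot\deg(\mathbb{P}(U)\cap C)\;(<)\;\leq\;(\dim U)\,d,
\]
with strict inequality in the stable case whenever $\dim U<n$.

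To obtain this, I would use the Euler sequence restricted to $C$,
\[
0\to\mathcal{O}_C\to V\otimes\mathcal{O}_C(1)\to T\mathbb{P}^n|_C\to 0,
\]
and construct for each proper $U$ a subsheaf of $T\mathbb{P}^n|_C$ whose slope encodes the above incidence. Concretely, the composition $\sigma_U\colon\mathcal{O}_C\to(V/U)\otimes\mathcal{O}_C(1)$ is a section that vanishes exactly on $D_U:=\mathbb{P}(U)\cap C$; indeed, at $p=[v]\in C$ the Euler kernel is $\mathbb{C}\cdot v$, which lies in $U\otimes\mathcal{O}_C(1)|_p$ if and only if $v\in U$. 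A direct local calculation with a uniformizer on $C$ identifies the torsion part of the cokernel of $\sigma_U$ as a sheaf of length $\deg D_U$, whence the saturation $\overline{\mathcal{E}}_U\subseteq T\mathbb{P}^n|_C$ of the image of $U\otimes\mathcal{O}_C(1)$ satisfies
\[
\operatorname{rk}\overline{\mathcal{E}}_U=\dim U,\qquad \deg\overline{\mathcal{E}}_U=(\dim U)\,d+\deg D_U.
\]
Applying the (semi)stability of $T\mathbb{P}^n|_C$ (slope $(n+1)d/n$) to $\overline{\mathcal{E}}_U$ for $\dim U<n$ yields
\[
d+\frac{\deg D_U}{\dim U}\;=\;\mu(\overline{\mathcal{E}}_U)\;(<)\;\leq\;\frac{(n+1)\,d}{n},
\]
which is equivalent to the desired numerical inequality.

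Note that the hypothesis forces $C\subset\mathbb{P}^n$ to be non-degenerate: otherwise $C\subset\mathbb{P}^k\subsetneq\mathbb{P}^n$ would yield the sub-bundle $T\mathbb{P}^k|_C\subset T\mathbb{P}^n|_C$ of slope $(k+1)d/k>(n+1)d/n$, contradicting the assumed (semi)stability. Non-degeneracy in turn guarantees that $D_U$ is $0$-dimensional for every proper $U$, so $\deg D_U$ is a well-defined non-negative integer.

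The main obstacle is verifying that Mumford's Chow weight formula for curves indeed packages exactly into a combination of these incidence invariants with non-negative coefficients coming from the jumps $r_{i+1}-r_i$. In particular, handling the flag terms corresponding to a hyperplane subspace ($\dim U_i=n$), where $\deg D_{U_i}=d$ by B\'ezout and the bracket $(\dim U_i)d-n\deg D_{U_i}$ vanishes identically, requires care: in the stable case the strict positivity of $\mu^{\mathrm{Chow}}(C,\lambda)$ must be extracted from the non-hyperplane members of the flag, and the trace-zero condition on the weights must be used to ensure that a non-trivial $\lambda$ always activates at least one such member.
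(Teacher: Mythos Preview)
Your approach is correct, but it differs from the paper's in two ways. First, the paper passes to the dual side: it uses the identification $T\mathbb{P}^n|_C\cong M_{L,V}^*\otimes L$ with the syzygy bundle $M_{L,V}=\ker(V\otimes\mathcal{O}_C\to L)$, and for each subspace $W\subset V$ it exhibits the obvious sub\emph{bundle} $M_{L',W}\subset M_{L,V}$ (where $L'$ is the line bundle generated by $W$), whereas you build saturated sub\emph{sheaves} $\overline{\mathcal{E}}_U\subset T\mathbb{P}^n|_C$. These are dual constructions and yield the same numerical inequality: your condition $n\cdot\deg D_U\le(\dim U)\,d$ is, after the substitution $U\leftrightarrow\mathrm{Ann}\,W$, precisely Mumford's \emph{linear (semi)stability} of $C\subset\mathbb{P}^n$. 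Second, and more substantially, once linear (semi)stability is established the paper simply invokes Mumford's Theorem~4.12 (\cite{mum}) that linear stability implies Chow stability for curves, as a black box. The ``main obstacle'' you identify --- packaging the Chow weight $\mu^{\mathrm{Chow}}(C,\lambda)$ as a non-negative combination of the brackets $(\dim U_i)\,d-n\deg D_{U_i}$ over the weight filtration, and handling the hyperplane steps via the trace-zero condition --- is exactly the content of Mumford's proof of that theorem. So your plan amounts to re-deriving Theorem~4.12 inside the argument; this makes your proof more self-contained, while the paper's is considerably shorter by outsourcing that step.
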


Another way of stating Theorem \ref{teotan}, via the Hitchin-Kobayashi correspondence, is:

\begin{theorem} Let $C \subset \mathbb{P}^n$ be a
complex irreducible smooth curve. If  $T\mathbb P_{|C} ^n$ admits
an Hermitian-Einstein metric then $C \subset \mathbb{P}^n$ is Chow
poly-stable, and Chow stable if $T\mathbb P_{|C} ^n$ is irreducible.
\end{theorem}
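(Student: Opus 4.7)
The plan is to reduce the statement to Theorem~\ref{teotan} via the Hitchin-Kobayashi correspondence. On the compact Riemann surface $C$ this correspondence is the Narasimhan-Seshadri theorem: a holomorphic bundle $E$ on $C$ admits a Hermitian-Einstein metric if and only if $E$ is polystable, i.e.\ $E \cong \bigoplus_{i=1}^{k} E_i$ with each $E_i$ stable and $\mu(E_i) = \mu(E)$. So the hypothesis translates into polystability of $T\mathbb P^n_{|C}$, and I must extract Chow polystability of $C \subset \mathbb P^n$, upgrading to Chow stability when the bundle is irreducible.

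The irreducible case is immediate: an indecomposable polystable bundle has $k=1$ and is therefore stable, so Theorem~\ref{teotan} directly gives Chow stability, which in particular implies Chow polystability. For the genuinely polystable case I would first invoke Theorem~\ref{teotan} through the implication polystable $\Rightarrow$ semistable, obtaining Chow semistability of $C$. The remaining task is to upgrade this to Chow polystability, which in GIT terms means closedness of the $SL(n+1)$-orbit of $C$ inside the Chow-semistable locus.

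For the orbit-closure I plan to invoke the Hilbert-Mumford criterion in its refined form: for every one-parameter subgroup $\lambda$ of $SL(n+1)$ with $\mu(C,\lambda) = 0$, the limit $\lim_{t \to 0} \lambda(t)\cdot C$ must lie in the orbit of $C$. Running the argument of Theorem~\ref{teotan} in reverse, a vanishing Chow weight should correspond to a saturated subbundle $F \subset T\mathbb P^n_{|C}$ of matching slope; polystability then promotes $F$ to a direct summand, and the associated projector identifies $\lambda$ with an automorphism of the bundle decomposition, so that the limit of $\lambda(t)\cdot C$ is projectively equivalent to $C$. The main obstacle is exactly this equality analysis: Theorem~\ref{teotan} presumably establishes the inequalities needed for stability and semistability, but reading off a \emph{direct-summand} subbundle from a vanishing Chow weight, and then lifting the associated bundle automorphism to an element of $SL(n+1)$ that truly fixes $C$, demands a careful inspection of the numerical computation underlying Theorem~\ref{teotan}.
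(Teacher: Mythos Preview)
Your reduction via Narasimhan--Seshadri and your handling of the irreducible case are exactly what the paper does: it presents this theorem simply as ``another way of stating Theorem~\ref{teotan}, via the Hitchin--Kobayashi correspondence,'' with no further argument. In particular, the paper supplies no separate justification for the genuinely reducible polystable case beyond what Theorem~\ref{teotan} already yields (Chow semistability from semistability of $T\mathbb P^n_{|C}$); the upgrade to Chow \emph{poly}stability is asserted rather than argued.

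Your attempt to supply that missing step therefore goes beyond the paper, but the route you sketch has a structural obstacle. The proof of Theorem~\ref{teotan} runs through the chain
\[
\text{stability of } M_{L,V} \;\Longrightarrow\; \text{linear stability of } C \;\Longrightarrow\; \text{Chow stability of } C,
\]
the second arrow being Mumford's Theorem~\ref{teocls}. Neither arrow reverses in general, as the paper's remark immediately following the proof of Theorem~\ref{teotan} makes explicit. A one-parameter subgroup $\lambda$ of $SL(n+1)$ with vanishing Chow weight need not arise from a single linear subspace $W \subset V$, and hence need not produce a subbundle of $M_{L,V}$ at all: Mumford's proof of Theorem~\ref{teocls} involves an averaging over such subspaces rather than a dictionary between individual one-parameter subgroups and individual subbundles. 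Thus ``reading off a direct-summand subbundle from a vanishing Chow weight'' is not available from the argument underlying Theorem~\ref{teotan}, and the equality analysis you correctly flag as the main obstacle is not one that inspecting that proof will resolve.
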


Our theorem provides a sufficient condition for
the Chow stability for irreducible smooth curves. The proof of the
theorem is not complicated, mainly we use \cite[Theorem
4.12]{mum} and the relation between the tangent bundle
and the syzygy bundle. But our statement is not in the literature
and our main contribution is the interpretation of Chow (and Hilbert) stability
via the stability of the restriction of the tangent bundle of the
projective space to $C$. One may conjecture that Theorem
\ref{teotan} holds also for varieties of higher dimension, that is,

\begin{conjecture} Let $X \subset \mathbb{P}^n$ be a complex
irreducible smooth variety. If the restriction $T\mathbb P_{|X} ^n$ of the
tangent bundle of $\mathbb P ^n$ to $X$
 is $\mathcal{O}_X(1)$-stable  then $X\subset
\mathbb P ^n$ is Chow  stable.
\end{conjecture}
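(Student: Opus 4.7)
The plan is to adapt the curve argument that underlies Theorem~\ref{teotan} to the higher-dimensional setting. The crucial input in the one-dimensional case is Mumford's \cite[Theorem 4.12]{mum}, which rephrases the Hilbert--Mumford weight of a one-parameter subgroup acting on the Chow form as a numerical invariant of a filtration of the restricted tangent bundle. The target here is to produce a $\dim X\geq 2$ version of the same bookkeeping, with $\deg$ replaced by the twisted degree $\deg_{\mathcal{O}_X(1)}(\cdot)=c_1(\cdot)\cdot c_1(\mathcal{O}_X(1))^{\dim X-1}$, so that the $\mathcal{O}_X(1)$-stability hypothesis is of the correct flavour.

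First I would invoke the Hilbert--Mumford criterion: Chow stability of $X\subset\mathbb{P}^n$ amounts to strict negativity of the Chow weight $e_\lambda(X)$ for every non-trivial one-parameter subgroup $\lambda$ of $SL(n+1,\mathbb{C})$. Diagonalising $\lambda$ with weights $r_0\leq\cdots\leq r_n$, $\sum r_i=0$, the eigenspace flag $V_\bullet\subset H^0(\mathbb{P}^n,\mathcal{O}(1))$ pulls back to $X$ and, via the Euler sequence
\[
0\to\mathcal{O}_X\to H^0(\mathbb{P}^n,\mathcal{O}(1))\otimes\mathcal{O}_X(1)\to T\mathbb{P}^n_{|X}\to 0,
\]
produces a filtration $\mathcal{F}_\bullet$ of $T\mathbb{P}^n_{|X}$ by coherent subsheaves whose ranks and slopes can be read off the $V_i$'s. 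Next I would expand $e_\lambda(X)$, using the Knudsen--Mumford / Deligne pairing formalism, into a sum of the shape
\[
e_\lambda(X)=\sum_i (r_{i+1}-r_i)\bigl(A\,\mathrm{rk}\,\mathcal{F}_i-B\,\deg_{\mathcal{O}_X(1)}\mathcal{F}_i\bigr)+(\text{lower order terms}),
\]
for universal positive constants $A,B$ depending only on $\dim X$, $\deg X$ and $n$. Finally, the $\mathcal{O}_X(1)$-stability of $T\mathbb{P}^n_{|X}$ gives $\mu_{\mathcal{O}_X(1)}(\mathcal{F}_i)<\mu_{\mathcal{O}_X(1)}(T\mathbb{P}^n_{|X})$ for every proper $\mathcal{F}_i$, which should force each coefficient $A\,\mathrm{rk}\,\mathcal{F}_i-B\,\deg_{\mathcal{O}_X(1)}\mathcal{F}_i$ to have the right sign; combined with $r_{i+1}\geq r_i$ this yields the required sign of $e_\lambda(X)$.

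The main obstacle is the expansion step. In dimension one Mumford's Theorem~4.12 packages everything cleanly because the Chow weight is \emph{linear} in the $r_i$'s, so a single intersection number per filtration step suffices. In dimension $\geq 2$ the Chow weight is genuinely a polynomial in the $r_i$'s, and the subleading intersection numbers in the Knudsen--Mumford expansion involve Chern classes of $X$ itself, which are not a priori controlled by slope-stability of $T\mathbb{P}^n_{|X}$. One natural way to circumvent this is to prove first the \emph{asymptotic} statement for $(X,\mathcal{O}_X(k))$ with $k\gg 0$, where only the leading term of the expansion survives, and then to close the gap to the stated embedding by supplementing the bundle-theoretic bound with a Bogomolov-type positivity inequality on $T\mathbb{P}^n_{|X}$ (which does follow from stability). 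Making this last step precise and uniform in $\lambda$ is where I would expect the genuinely new work to lie, and it may well require further hypotheses on $X$ beyond stability of the restricted tangent bundle.
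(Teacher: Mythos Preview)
The statement you are attempting to prove is labelled \emph{Conjecture} in the paper, not Theorem, and the paper offers no proof whatsoever. Immediately after proving Theorem~\ref{teotan} for curves the authors write ``One may conjecture that Theorem~\ref{teotan} holds also for varieties of higher dimension,'' and then state the conjecture you quote. There is therefore nothing to compare your proposal against: the paper regards this as open.

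On the substance of your sketch: you correctly identify where the curve argument breaks. A minor inaccuracy is your description of \cite[Theorem 4.12]{mum} as ``rephrasing the Hilbert--Mumford weight \ldots\ as a numerical invariant of a filtration of the restricted tangent bundle.'' That theorem says only that linear stability implies Chow stability for curves; the link to $T\mathbb{P}^n_{|C}$ is made separately in the paper via the syzygy bundle $M_{L,V}$ and the identity $T\mathbb{P}^n_{|C}=M_{L,V}^*\otimes L$. More importantly, your own proposal is explicitly incomplete: the expansion of the Chow weight in dimension $\geq 2$ is a polynomial of degree $\dim X+1$ in the weights $r_i$, and the lower-order terms involve intersection numbers (e.g.\ $c_1(\mathcal{F}_i)^2\cdot c_1(\mathcal{O}_X(1))^{\dim X-2}$ and Chern classes of $X$) that slope stability of $T\mathbb{P}^n_{|X}$ simply does not bound. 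Your suggested detour through asymptotic Chow stability of $(X,\mathcal{O}_X(k))$ for $k\gg 0$ would at best establish a weaker statement than the conjecture as written, and the ``Bogomolov-type'' closing step you allude to is not made precise. As it stands, then, this is a plausible outline of where one might look, but it is not a proof, and you yourself flag that ``further hypotheses on $X$'' may be needed---which is consistent with the authors' decision to leave the higher-dimensional statement as a conjecture.
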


The theorem is still true if we drop the assumption
that the base field is $\mathbb{C}$. The proof works for any field $K=\bar{K}$
but we will stay with $\mathbb{C}$
since some of our applications work only for curves in $\mathbb{C}.$

Our viewpoint also sheds some new light on the singularities of projective varieties.
Suppose that $ X\subset \mathbb{P}^n$ is a complex
irreducible non-degenerate variety of dimension $r$ and $p\in X$ is a singularity of multiplicity $\mu _p.$
In Proposition \ref{proplss} we prove (see Definition \ref{dells} for the definition of linearly stable)

\bigskip

{\it \ \ \ \  \ \ \  If
$ X\subset \mathbb{P}^n$ is linearly stable (resp. semistable) then $\mu _p < \frac{\deg X}{n+1-r}$  (resp. $\leq $) for any $p\in X.$}

\bigskip

It is well known (see \cite[Theorem 4.15]{mum}), that if $C
\subset \mathbb{P}^n$ is a smooth irreducible curve of genus $g\geq 1$
embedded by a complete linear system of degree $d>2g$ then $C$ is
Chow stable. For non complete linear systems and lower degrees the existence of
Chow stable (semistable) curves is established by our next result (see Corollary \ref{corram} and \ref{teolow}). First recall that for a general curve of genus $g>0$, a sharp lower
bound for the existence of generated linear series $(L,V)$ of type
$(d,n+1)$ is $d\ge g+n-\left\lfloor\frac{g}{n+1}\right\rfloor$.
That is, the Brill-Noether number, for line bundles, $\rho (g,d,n+1):=
g-(n+1)(n-d+g)$ is non-negative (see Section $3$ for the definition of a \emph{Petri} curve).

\begin{theorem}\label{teostab0} Let $C\subset \mathbb{P}^n$ be an
irreducible smooth curve of genus $g\geq 1$
embedded by the linear series $(L,V)$ of type $(d,n+1)$. The canonical embedding of a non-hyperelliptic curve is Chow stable.
 If $C$ and $(L,V)$ are
general and $\rho (g,d,n+1)\geq 0$  then $C\subset \mathbb{P}^n$ is Chow semistable.
Moreover, $C\subset \mathbb{P}^n$ is Chow stable if one of the following conditions
\begin{enumerate}
\item $C$ is a general curve of genus $g\geq 2$ and $gcd(d,n)=1;$
\item $C$ is a curve of genus $g=1, d\geq n+1$ and $gcd(d,n)=1;$
\item $C$ is a curve of genus $g=2, d\geq n+2$ with $d\ne 2n ;$
\item $C$ is a Petri curve of genus $g\geq 3$ and $n\leq 4;$
\item $C$ is a Petri curve of genus $g\geq 2(n-2)$ and $n \geq 5;$
\end{enumerate}
is satisfied.
\end{theorem}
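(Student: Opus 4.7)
The plan is to reduce each assertion to a statement about (semi)stability of the restricted tangent bundle $T\mathbb{P}^n_{|C}$ and then invoke Theorem \ref{teotan}. The bridge to the linear series $(L,V)$ is the Euler sequence: writing $\mathbb{P}^n=\mathbb{P}(V)$ and restricting to $C$ we obtain
\[
0 \to \mathcal{O}_C \to V^{*}\otimes L \to T\mathbb{P}^n_{|C} \to 0.
\]
Comparing this with the dual of the defining sequence of the syzygy (kernel) bundle $M_{L,V}$,
\[
0 \to M_{L,V} \to V\otimes \mathcal{O}_C \to L \to 0,
\]
yields an identification of $T\mathbb{P}^n_{|C}$ with a twist of $M_{L,V}^{*}$ by a line bundle. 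Since (semi)stability is preserved by duality and by tensoring with a line bundle, it suffices to prove (semi)stability of $M_{L,V}$ in each case.

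First I would dispose of the canonical embedding: for a non-hyperelliptic curve, stability of $M_{K_C}$ is the classical theorem of Paranjape--Ramanan, so Chow stability follows immediately from Theorem \ref{teotan}. For the semistability assertion under $\rho(g,d,n+1)\geq 0$, I would invoke the known semistability results for syzygy bundles associated with general generated linear series on a general curve inside the Brill--Noether range (Butler's conjecture and its partial confirmations); these yield semistability of $M_{L,V}$, and Chow semistability follows.

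The five stability conditions each amount to promoting semistability of $M_{L,V}$ to strict stability. In cases (1) and (2) the coprimality hypothesis $\gcd(d,n)=1$ forces $\mu(M_{L,V})=-d/n$ to have denominator $n$, and no proper subbundle of rank $<n$ can share this slope; this numerical obstruction automatically converts semistability into stability. Cases (3)--(5) require more: one should combine the general semistability statement with structural information about subbundles of $M_{L,V}$ on a Petri curve, so as to exclude destabilizing subsheaves of each possible rank and degree, and treat the very low-genus subcase $g=2$ in (3) by an ad hoc argument using the list of line bundles of small degree.

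The principal technical obstacle will be cases (4) and (5), where the coprimality trick is unavailable and one must genuinely exploit the Petri property. The precise bounds on $g$ versus $n$ are almost certainly dictated by dimension estimates for Brill--Noether loci controlling possible destabilizing sub-linear series $(L',V')\subset (L,V)$; the challenge is to translate those estimates into an explicit strict lower bound for the slope of every proper sub-bundle of $M_{L,V}$. This case analysis, rather than the clean reduction to $M_{L,V}$ via Theorem \ref{teotan}, is where the real work of the theorem lies.
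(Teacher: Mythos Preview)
Your reduction is exactly the paper's: identify $T\mathbb{P}^n_{|C}\cong M_{L,V}^*\otimes L$ via the Euler sequence, invoke Theorem~\ref{teotan}, and then verify (semi)stability of $M_{L,V}$ case by case; the canonical case via Paranjape--Ramanan and the coprimality trick for (1) are likewise identical to the paper's treatment.

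The one substantive divergence is in what you regard as ``the real work.'' You plan to carry out the Petri/low-genus analysis for cases (3)--(5) by hand, excluding destabilizing subbundles via Brill--Noether dimension estimates. The paper does none of this: it treats Theorem~\ref{teostab0} as a summary and simply cites the literature for stability of $M_{L,V}$ in each remaining case (Lange--Newstead \cite{ln} for $g=1$; \cite{mio} and \cite{bbn1} for $g=2$; \cite[\S7]{bbn1} for Petri curves with $n\le 4$; \cite[Theorem~6.1]{bbn} for Petri curves with $n\ge 5$ and $g\ge 2(n-2)$). So your outline is correct and would yield a self-contained proof, but it is considerably more ambitious than what the paper actually does, and the specific bounds in (3)--(5) are not derived here but imported wholesale from those references.
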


 Mainly, Theorem \ref{teostab0}  summarize
the known results on the stability of the
syzygy bundle. The breakthrough result in this
direction has been achieved recently in
\cite{bbn}, where D.C. Butler's Conjecture (see \cite[Conjecture 2]{but})
was proved for line bundles on smooth curves in some generality.

Let $Hilb_{P(t)}{{\mathbb{P}^n}}$ be the
Hilbert scheme of the projective space $\mathbb{P}^n$ with Hilbert polynomial $P(t)=dt+(1-g)$. The problem of describing the
Hilbert scheme parameterizing embedded curves seems very natural and interesting too.
In general, the Hilbert schemes $Hilb_{P(t)}{{\mathbb{P}^n}}$ can be quite pathological.

Denote by
$Hilb^{P(t),s}_{Ch}$ (respectively $Hilb^{P(t),ss}_{Ch}$) the irreducible component of the
Hilbert scheme $Hilb_{P(t)}{{\mathbb{P}^n}}$
containing the generic Chow stable (respectively Chow semistable) curve. The principal significance of Theorem \ref{teostab0} is that it
allows us to describe an open smooth subscheme of $Hilb^{P(t),s}_{Ch}$ (respectively $Hilb^{P(t),ss}_{Ch}$).

Recall that for $d=m(2g-2)$ with $m\geq 5,$
Mumford uses the $m$-canonical embedding $C\subset \mathbb{P}(H^0(C,K^m_C)^*)$ to construct from $Hilb^{P(t),s}_{Ch}$ the moduli space $M_g$ of smooth curves.  In our case we allow different embedding of the same curve and the embedding can be by non-complete linear systems and of degree $g+n-\left\lfloor\frac{g}{n+1}\right\rfloor\leq d$. Recall that if $d< g+n-\left\lfloor\frac{g}{n+1}\right\rfloor$, there are no general or Petri curves in $Hilb_{P(t)}{{\mathbb{P}^n}}$.

In order to state our results we recall from \cite[Chapter
XXI]{arb2} that given the universal family $p_0:\mathcal{C}\to M^0_g$, of smooth curves parameterized by  the fine moduli space $M^0_g$  of automorphisms-free smooth curves,
 there exists a  relative linear series
$\mathcal{G}_{d}^{n}(p)$ over $M^0_g$.
In Theorem \ref{teoprinbund} we recall the proof of the existence of $\mathcal{G}_{d}^{n}(p_0)$. Since
  the family $p_0:\mathcal{C}\to M^0_g$ has no section, there is no universal family. However, we construct a $ PGL_n-$principal bundle ( see Theorem \ref{teoprinbund})  $\mathcal{B}_{d}^{n}\to \mathcal{P}_d^n$ over an open irreducible subscheme $\mathcal{P}_d^n$ of $ \mathcal{G}_{d}^{n}(p_0)$ with universal properties. We use the $PGL_n$-principal bundle $\mathcal{B}_{d}^{n}$ to define a
 natural algebraic morphism $$\Gamma :\mathcal{B}_{d}^{n} \longrightarrow  Hilb^{P(t)}_{{\mathbb{P}^n}}.$$

We can now formulate our main results (see Theorems \ref{teochow11} and \ref{teochowprin1} and Corollary \ref{corred1}).

\begin{theorem}\label{teochow1} Let $g,d$ and $n$ be positive integers with
$g\geq 4$. For any  $d\geq g+n-\left\lfloor\frac{g}{n+1}\right\rfloor, $ $Hilb^{P(t),ss}_{Ch}\ne \emptyset .$ Moreover, if $d> g+n-\left\lfloor\frac{g}{n+1}\right\rfloor $ and if one of the conditions in Theorem  \ref{teostab0} is satisfied then
\begin{enumerate}
\item  $Hilb^{P(t),s}_{Ch}\ne \emptyset$, and is a regular component of the Hilbert scheme $Hilb_{P(t)}{{\mathbb{P}^n}}$.
\item $\Gamma (\mathcal{B}_d^n)\subset Hilb^{P(t),s}_{Ch},$
\item $\dim Hilb^{P(t),s}_{Ch}=
3g-3+\rho(g,d,n+1)+n(n+2),$
\item $Hilb^{P(t),s}_{Ch}$ is smooth at $\Gamma (z)$ for any
$z \in \mathcal{B}_{d}^{n}$,
\item $\dim Hilb^{P(t),s}_{Ch}/SL(n+1)= 
3g-3+\rho(g,d,n+1)$ and
\item  $\Gamma :\mathcal{B}_{d}^{n}\longrightarrow  Hilb^{P(t),s}_{Ch}$ is an open immersion.
\end{enumerate}
Moreover, the quotient stack $[\Gamma (\mathcal{B}_{d}^{n})/SL(n+1) ]$ is a smooth irreducible Deligne-Mumford stack of dimension
$3g-3+\rho(g,d,n+1)$.
\end{theorem}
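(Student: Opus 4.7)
The plan is to reduce each assertion to facts about the parameter space $\mathcal{B}_d^n$, combined with the Chow (semi)stability produced by Theorem \ref{teostab0} and standard deformation theory of embedded curves. For the unconditional non-emptiness statement, I would observe that by Theorem \ref{teostab0}, a generic point of $\mathcal{B}_d^n$ (when $\rho(g,d,n+1)\ge 0$) gives a Chow semistable embedded curve; its class in $Hilb_{P(t)}\mathbb{P}^n$ lies in some irreducible component, which by definition is $Hilb^{P(t),ss}_{Ch}$. Under any of the hypotheses (1)--(5) of Theorem \ref{teostab0} the same argument produces a Chow stable embedding, giving (1), and forces the irreducible image $\Gamma(\mathcal{B}_d^n)$, which meets the Chow stable locus, to lie in $Hilb^{P(t),s}_{Ch}$, giving (2).

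For the dimension computations, the identity $\dim M_g^0=3g-3$, the relative Brill--Noether dimension $\dim\mathcal{G}_d^n(p_0)/M_g^0=\rho(g,d,n+1)$, and the relative dimension $n(n+2)=\dim PGL(n+1)$ of $\mathcal{B}_d^n\to\mathcal{P}_d^n$ yield
\[
\dim\mathcal{B}_d^n=3g-3+\rho(g,d,n+1)+n(n+2).
\]
To transport this dimension to $Hilb^{P(t),s}_{Ch}$ and simultaneously prove smoothness (parts (1), (3), (4)), I would verify $H^1(C,N_{C/\mathbb{P}^n})=0$ at each $[C]=\Gamma(z)$. From the normal bundle sequence $0\to T_C\to T\mathbb{P}^n|_C\to N_{C/\mathbb{P}^n}\to 0$ this reduces to $H^1(C,T\mathbb{P}^n|_C)=0$, which via the restricted Euler sequence $0\to\mathcal{O}_C\to V\otimes L\to T\mathbb{P}^n|_C\to 0$ is a matter of surjectivity of the connecting map $H^1(\mathcal{O}_C)\to V\otimes H^1(L)$; this is trivial when $L$ is non-special and follows by a direct cohomological argument in the remaining cases covered by Theorem \ref{teostab0}. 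Then $h^0(N_{C/\mathbb{P}^n})$ is computed by Riemann--Roch to equal $\dim\mathcal{B}_d^n$, and (5) follows at once since $SL(n+1)$ acts with finite stabilizers.

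Part (6) is the most delicate. The morphism $\Gamma$ is injective: from an embedded curve $[C\subset\mathbb{P}^n]$ one recovers the triple $(C,L,V)$ via $L=\mathcal{O}_{\mathbb{P}^n}(1)|_C$ and $V=\mathrm{image}(H^0(\mathbb{P}^n,\mathcal{O}(1))\to H^0(C,L))$, and the principal bundle structure on $\mathcal{B}_d^n$ supplies the remaining $PGL(n+1)$-datum. Combined with smoothness of source and target at every image point, the equality of dimensions, and injectivity of $d\Gamma$ (verified by chasing tangent-space isomorphisms through the same Euler and normal bundle diagrams), this forces $\Gamma$ to be an open immersion. Finally, the quotient stack $[\Gamma(\mathcal{B}_d^n)/SL(n+1)]$ is Deligne--Mumford because the stabilizers are finite: the $PGL(n+1)$-action on $\mathcal{B}_d^n$ is free, so the $SL(n+1)$-stabilizers reduce to the center $\mu_{n+1}\subset SL(n+1)$; smoothness, irreducibility, and the dimension $3g-3+\rho(g,d,n+1)$ then descend from $\mathcal{B}_d^n$.

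The principal obstacle I foresee is precisely the verification that $\Gamma$ is an open immersion: this requires a clean comparison between the tangent space of $\mathcal{B}_d^n$, which parameterizes infinitesimal deformations of the triple $(C,L,V)$ together with a projective frame, and the tangent space $H^0(C,N_{C/\mathbb{P}^n})$ of the Hilbert scheme, carried out via an explicit diagram mixing the Euler sequence for $T\mathbb{P}^n$, the normal bundle sequence, and the deformation complex of a generated linear series recalled in the construction of $\mathcal{B}_d^n$. Once this identification is in hand, the remaining parts reduce to routine deformation-theoretic bookkeeping.
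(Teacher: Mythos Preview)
Your outline matches the paper's proof closely: the paper likewise constructs $\Gamma$ via a universal family, invokes Theorem~\ref{teostab0} for Chow (semi)stability, proves $H^1(C,N_{C/\mathbb{P}^n})=0$ via $H^1(C,T\mathbb{P}^n|_C)=0$, reads off the dimension by Riemann--Roch, and deduces the open immersion from injectivity plus equality of dimensions. Two points deserve sharpening.

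First, your treatment of $H^1(C,T\mathbb{P}^n|_C)=0$ is incomplete. The surjectivity of $H^1(\mathcal{O}_C)\to V^*\otimes H^1(L)$ that you need is, by Serre duality, exactly the injectivity of the multiplication map $V\otimes H^0(K_C\otimes L^{-1})\to H^0(K_C)$, i.e.\ the Petri condition for $(L,V)$. The paper makes this explicit by writing $T\mathbb{P}^n|_C=M_{L,V}^*\otimes L$, applying Serre duality to get $h^1(T\mathbb{P}^n|_C)=h^0(M_{L,V}\otimes L^{-1}\otimes K_C)$, and then tensoring the syzygy sequence by $L^{-1}\otimes K_C$ so that the Petri map appears directly. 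The point you should not miss is that this vanishing holds for \emph{every} $z\in\mathcal{B}_d^n$ because $\mathcal{B}_d^n$ is defined over the Petri locus $\mathcal{P}_d^n$; it is not a case-by-case consequence of the stability hypotheses in Theorem~\ref{teostab0}, and attempting a case analysis there would be the wrong path.

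Second, the obstacle you anticipate for part~(6) is handled in the paper without any explicit tangent-space chase: once $\Gamma$ is injective, source and target are smooth of the same dimension at each image point, the paper simply invokes \cite[Corollaire~(4.4.9)]{grot} to conclude that $\Gamma$ is an open immersion. Your proposed diagram chase would work, but it is unnecessary.
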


The theorem gains in interest if we recall that
the birational geometry of moduli spaces has been a topic of research interest since some time ago.
In particular, the general problem of understanding birational models of the moduli space $M_g$ has received much attention
over the past decade. The goal of
the so called Hassett-Keel program (see \cite{hh}) is to find the minimal
model of the moduli space $M_g$ of curves
via the successive constructions of modular birational
models of $\overline{M_g}$, and aims to give modular interpretations of certain log
canonical models of $\overline{M_g}$. Recall that the most successful approach so
far has been to compare these log canonical models to alternate compactifications
of $\overline{M_g}$ constructed via GIT on the so-called $m$-th Hilbert spaces $Hilb_{P(t)}{{\mathbb{P}^n}}$,
of $m$-canonically embedded curves of genus $g$, for small $m$ and degree $d$.
For a recent account of the
theory we refer the reader to  \cite{jarod} and \cite{melo}.
From the above theorem we have that for low degrees,
up to $d\geq g+ n-\left\lfloor\frac{g}{n+1}\right\rfloor ,$ the Petri curves are in
 any $Hilb^{P(t),s}_{Ch}$, and the bound is sharp.
It  may be possible that could be  easier to describe $[Hilb^{P(t),s}_{Ch}/SL(n+1)]$
 rather than the loci of $m$-canonically
embedded curves, but we will not develop this point here.

We finish this article recalling that in \cite{geis} Gieseker introduced the concept of Hilbert stability for projective curves
and Morrison in \cite[Corollary 3.5 (i)]{mor}, proved that Chow stability implies Hilbert stability.
Therefore,  we can reformulate the above results in terms of Hilbert stability (see Section $4$).

This article is organized as follows. In Section  $2$, we review
some of the standard facts on Chow stability
and establish the relation between Chow stability and stability of
the restriction of the tangent bundle.
In Section  $3$ we summarize the relevant
 material on the stability of $T\mathbb P_{|C} ^n$ and prove
 Theorem \ref{teostab0}. The main results,  Theorems \ref{teochow11} and \ref{teochowprin1} and Corollary \ref{corred1},
 are proved in the fourth section.

{\it Notation:} Given a vector bundle $E$ over $C$ we denote by
$d_E$ the degree, by $n_E$ the rank and by $\mu
(E):=\frac{d_E}{n_E}$ the slope of $E.$  For abbreviation, we
write $H^i(E)$ instead of $H^i(C,E),$ whenever it is convenient.

{\small Acknowledgments: We would like to thank C. Ciliberto, Margarida Melo, J. Alper, A. Beauville and E. Mistretta
 for very helpful conversations and
CONACYT for support. Our special thanks to M. Brion who helped us to make clear some points and specially
to  improve Theorem \ref{teochowprin1}.
 The first author is a member of the research group VBAC
and thanks U. Bhosle, P.E. Newstead and E. Sernesi for their
comments and suggestions. She also thanks ICTP, Trieste for the hospitality as well as for
the support during the preparation of this work. }

\section{Chow stability }

In this section we recall from \cite{mum} the definition of Chow
stability and linear stability for  projective varieties $X\subset
\mathbb{P}^n$ and prove Theorem \ref{teotan}.

Let $X\subset \mathbb{P}^n$ be a non-degenerate irreducible
complex projective variety of dimension $r\geq 1$ and degree
$d\geq 2.$  The Chow form $F _X$ associated to $X\subset
\mathbb{P}^n$ is defined as follows (see \cite{mum}).

Consider the locus $Y_X\subset
\mathbb{G}(n-r-1, \mathbb{P}^n)$ defined by
 $$Y_X:= \{ H\in \mathbb{G}(n-r-1, \mathbb{P}^n) : H\cap X\ne \emptyset \}.$$
 It is well known that $Y_X$ is an irreducible divisor in $\mathbb{G}(n-r-1, \mathbb{P}^n)$
of degree $d$ (in the Pl\"ucker coordinates). Moreover, $Y_X$ is the zero set of a section $F_X \in
H^0(\mathbb{G}(n-r-1, \mathbb{P}^n), \mathcal{O}(d))$ and $F_X$ is
determined up to multiplicative constants. Therefore, it defines a
point $$[F^*_X]\in \mathbb{P}(H^0(\mathbb{G}(n-r-1, \mathbb{P}^n), \mathcal{O}(d))^*).$$
We call $[F^*_X]$ {\it the Chow form} of
$X\subset \mathbb{P}^n.$
 There is a natural action of $SL(n+1)$ on
 $\mathbb{P}(H^0(\mathbb{G}(n-r-1, \mathbb{P}^n), \mathcal{O}(d))^*)$ and a
 GIT concept of $SL(n+1)$-stability (semistability and polystability) for the Chow forms.

 \begin{definition}\label{defchow}\begin{em}  A projective irreducible variety
$X\subset \mathbb{P}^n$ is  Chow stable if the Chow form $[F_X]$ is
$SL(n+1)$-stable. Similarly we define  Chow  semistability
and Chow polystability.
\end{em} \end{definition}

A non-degenerate
complex projective variety  $X\subset \mathbb{P}^n$ defines a point $[X\subset \mathbb{P}^n]$ in a Hilbert
scheme $Hilb_{P(t)}{{\mathbb{P}^n}}$ of $\mathbb{P}^n$, with a suitable Hilbert polynomial $P(t).$ Denote by
$Hilb^{P(t),s}_{Ch}$ (respectively $Hilb^{P(t),ss}_{Ch}$) the irreducible component of the
Hilbert scheme $Hilb_{P(t)}{{\mathbb{P}^n}}$
containing the generic Chow stable (respectively Chow semistable) variety.
Recall that the set of stable Chow forms is irreducible and open in $\mathbb{P}(H^0(\mathbb{G}(n-r-1, \mathbb{P}^n), \mathcal{O}(d))^*).$
Our aim is to describe an open set of $Hilb^{P(t),s}_{Ch}$ when $P(t)=dt+1-g.$

Let $X\subset \mathbb{P}^n$ be as above. Let $(\mathcal{L},V)$ be the generated
linear series on $X$ defining the embedding.  That is,
$\mathcal{L}=\mathcal{O}_X(1)$ is a line bundle on $X$ and  $$V= H^0(\mathbb{P}^n,
\mathcal{O}_{{\mathbb{P}^n}}(1))\subseteq H^0(X,\mathcal{L})$$ is
a linear subspace of sections of dimension $n+1$ which generates
$\mathcal{L}$ and induces the closed immersion
 $$\phi _{\mathcal{L},V}:X\to \mathbb{P}(V^*)=\mathbb{P}^n.$$ Actually,
 $\mathcal{L}= \phi _{\mathcal{L},V}^*(\mathcal{O}_{{\mathbb{P}^n}}(1))$. Note that the
 embedding need not be a canonical embedding, neither $(\mathcal{L},V)$ a complete
 linear system.

We now introduce the notion of linear stability, following Mumford
\cite{mum}. Given a linear space $B \subset \mathbb{P}^n$ of
dimension $n-m-1$ denote by $\pi _ B:\mathbb{P}^n-B\to \mathbb{P}^m$
the canonical projection and by $[\pi _{B}(X)]$ the image cycle of $X$ under
 $\pi _{B}$. That is, $\pi _{B}(X)$ with the multiplicity equal to the
 degree of the $\pi _{B}$ over $\pi _{B}(X)$ if $\dim \pi _{B}(X)=\dim X$, and $0$ otherwise.

 \begin{definition}\label{dells}\cite[Definition 2.16]{mum}.
 \begin{em} If  $X\subset \mathbb{P}^n$
is embedded by the linear series $(\mathcal{L},V)$, we say
that $X \subset \mathbb{P}^n$ (or $(\mathcal{L},V)$)  is
linearly stable (respectively  linearly semistable)
if for all linear subspaces  $W\subset V$
\begin{equation}\label{eqls}
\frac{\deg [\pi _{B}(X)]}{\dim W -r} >  \frac{\deg
{X}}{n+1-r}   \ \ \  (respectively \geq ),
 \end{equation}
 where $B= \mathbb{P}(Ann W)$ and $\dim \pi _{B}(X)=\dim X$ .
\end{em} \end{definition}

 \begin{remark}\label{remsing}\begin{em}  Recall from \cite[Proposition 2.5]{mum}
that
if we project from a point $p\in X$ then
$$\deg [\pi _{p}(X)]= \deg X-\mu _p,$$ where  $\mu _p $ is the multiplicity of $p\in X.$
 \end{em}
 \end{remark}

 Here is an elementary property of these concepts.

\begin{proposition}\label{proplss} Let
$X\subset \mathbb{P}^n $ be a projective variety of dimension $r\geq 1$ and $p$ a point in $X$. If
$X\subset \mathbb{P}^n$ is linearly stable (respectively semistable) then $\mu _p < \frac{\deg X}{n+1-r}$ (respectively $\leq $). In particular, if $X$ is a linearly stable (respectively semistable) curve
then $\mu _p < (\leq )\frac{\deg X}{n}.$
\end{proposition}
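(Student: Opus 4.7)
The strategy is to specialize the linear stability inequality to the projection from the single point $p$, and then invoke the formula in Remark \ref{remsing} expressing the degree of the projected cycle in terms of the multiplicity $\mu_p$.

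Concretely, I would choose $W\subset V$ to be the hyperplane of sections in $V$ that vanish at $p$, so $\dim W = n$. Then the annihilator $\mathrm{Ann}\,W$ is one-dimensional and $B=\mathbb{P}(\mathrm{Ann}\,W)$ is the single point $p\in\mathbb{P}^n$. Provided the image $\pi_p(X)$ has dimension $r$, the linear stability hypothesis (Definition \ref{dells}) applied to this $W$ gives
\[
\frac{\deg[\pi_p(X)]}{n-r} \;>\; \frac{\deg X}{n+1-r}
\]
(with $\geq$ in the semistable case). Substituting $\deg[\pi_p(X)] = \deg X - \mu_p$ from Remark \ref{remsing} and clearing denominators yields
\[
(n+1-r)(\deg X - \mu_p) \;>\; (n-r)\deg X,
\]
which simplifies at once to $\mu_p < \dfrac{\deg X}{n+1-r}$, as required. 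For $r=1$ we recover the stated bound $\mu_p<\deg X/n$.

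The one subtlety to address is the hypothesis $\dim \pi_p(X) = \dim X = r$ needed in (\ref{eqls}). For curves ($r=1$) this is automatic, so the \emph{In particular} clause requires no further argument. For general $r\geq 1$, $\dim \pi_p(X)<r$ would mean that $X$ is swept out by positive-dimensional cones with vertex $p$; in this situation one would need to handle the cone case separately (for instance by projecting from a larger center containing $p$, or by remarking that such a configuration already violates linear stability directly). This is the only step that is not a formal substitution, and it is the main (mild) obstacle; everything else is a one-line manipulation of the inequality combined with Remark \ref{remsing}.
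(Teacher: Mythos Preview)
Your argument is correct and follows essentially the same approach as the paper: apply the linear stability inequality to the hyperplane $W\subset V$ corresponding to projection from $p$, substitute $\deg[\pi_p(X)]=\deg X-\mu_p$ via Remark~\ref{remsing}, and rearrange. In fact you are more careful than the paper's own proof, which neither specifies that $W$ must be the hyperplane of sections vanishing at $p$ nor addresses the dimension condition $\dim\pi_p(X)=r$ that you flag.
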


\begin{proof} Suppose that $V$ is a vector space of dimension $n+1$ and $\mathbb{P}^n=\mathbb{P}^n(V)$.
 Assume that $X\subset \mathbb{P}^n(V)$ is linearly stable
and let $W\subset V$ be a subspace of dimension $n.$
The following inequality follows from $(\ref{eqls})$ and Remark \ref{remsing}
\begin{equation}\label{eqlss}
\frac{\deg X-\mu _p}{n -r} >  \frac{\deg
{X}}{n+1-r}.
 \end{equation}
Hence, from $(\ref{eqlss})$, $\mu _p < \frac{\deg X}{n+1-r}$, which is the desired conclusion.
 \end{proof}

The relation between Chow stability and linear stability for
curves is established by the next theorem, which goes back as far
as \cite{mum}.

\begin{theorem}\label{teocls}\cite[Theorem 4.12]{mum} Let $C\subset
\mathbb{P}^n$ be a curve. If $C\subset \mathbb{P}^n$
is linearly stable (respectively semistable) then $C\subset
\mathbb{P}^n$ is Chow stable (respectively Chow semistable).
\end{theorem}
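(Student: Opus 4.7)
The plan is to verify Chow (semi)stability directly via the Hilbert--Mumford numerical criterion applied to $F_C\in H^0(\mathbb G(n-2,\mathbb P^n),\mathcal O(d))$. Concretely, I must show that, under the linear stability hypothesis, every non-trivial $1$-parameter subgroup $\lambda:\mathbb G_m\to SL(n+1)$ has strictly negative Mumford weight $\mu^{F_C}(\lambda)$ on the Chow form; the semistable case then follows on replacing $<$ by $\leq$ throughout.

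After conjugating by an element of $SL(n+1)$ (which preserves Chow stability) I may assume $\lambda$ acts diagonally on $V=H^0(\mathbb P^n,\mathcal O(1))$ with integer weights $r_0\leq r_1\leq \cdots\leq r_n$ satisfying $\sum_i r_i=0$. This produces a complete flag $0=V_0\subsetneq V_1\subsetneq\cdots\subsetneq V_{n+1}=V$, where $V_j$ is spanned by the eigenvectors associated to the $j$ smallest weights, together with the dual flag of projective subspaces $B_j:=\mathbb P(Ann\,V_j)\subset \mathbb P^n$ of codimension $j$ and the rational projections $\pi_{B_j}:\mathbb P^n\dashrightarrow \mathbb P(V_j^*)$. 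Each such $V_j$ plays the role of the test subspace $W$ in Definition \ref{dells}.

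The heart of the argument is an explicit formula expressing $\mu^{F_C}(\lambda)$ as a $\mathbb Z$-linear combination of the weights $r_i$ whose coefficients are controlled by the projection degrees $\deg[\pi_{B_j}(C)]$. One obtains it by degenerating $C$ along $\lambda$ and tracking how the Chow form $F_C$ factors in the limit: the multiplicity with which a given face of the weight polytope contributes to the specialization of $F_C$ is precisely a projection degree, giving a formula of the schematic shape
\[\mu^{F_C}(\lambda)\;=\;\sum_{j=1}^{n}(r_j-r_{j-1})\,\deg[\pi_{B_j}(C)]\;-\;r_n\,d,\]
with the convention that $\deg[\pi_{B_j}(C)]=0$ whenever the image is $0$-dimensional. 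With this formula in hand, I plug in the linear stability inequalities $\deg[\pi_{B_j}(C)]>\frac{(j-1)\,d}{n}$, use $r_j-r_{j-1}\geq 0$ and $\sum_i r_i=0$, and conclude by an Abel summation that $\mu^{F_C}(\lambda)<0$.

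The main obstacle is the second step above: establishing the precise weight formula for $F_C$ with respect to $\lambda$. This is the technical core of Mumford's treatment in \cite[Chapter 2]{mum} and requires understanding how the incidence divisor $Y_C\subset \mathbb G(n-2,\mathbb P^n)$ degenerates under the toric action induced by $\lambda$, with the limit Chow cycle decomposing according to the strata of the flag $V_\bullet$. Once the formula is established, the reduction from linear stability to Chow stability is a clean summation by parts, and the semistable statement follows by the identical argument with weak inequalities.
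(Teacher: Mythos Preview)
The paper does not give its own proof of this theorem: it is quoted verbatim from Mumford \cite[Theorem~4.12]{mum} and used as a black box in the proof of Theorem~\ref{teotan}. So there is no in-paper argument to compare against; your outline is, in spirit, a sketch of Mumford's original proof via the Hilbert--Mumford criterion, a flag coming from a diagonalised $1$-parameter subgroup, a weight formula for the Chow form in terms of the projection degrees $\deg[\pi_{B_j}(C)]$, and an Abel summation.

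That said, your sketch has an internal sign/direction inconsistency that you should repair before calling it a proof. With the weights ordered $r_0\le\cdots\le r_n$ the coefficients $r_j-r_{j-1}$ in your ``schematic'' formula are all nonnegative, so substituting the \emph{lower} bounds $\deg[\pi_{B_j}(C)]>\tfrac{(j-1)d}{n}$ furnished by linear stability produces a \emph{lower} bound on $\mu^{F_C}(\lambda)$, not the upper bound you need to conclude $\mu^{F_C}(\lambda)<0$. Carrying out the Abel summation on your displayed expression with these lower bounds gives only $\mu^{F_C}(\lambda)>\tfrac{d\,r_0}{n}\le 0$, which is not what you want. In Mumford's actual treatment the normalisation is such that Chow stability corresponds to the numerical invariant being strictly \emph{positive}, and the weight formula (his Theorem~2.9 together with the summation identity of Proposition~2.11) is arranged so that the linear-stability lower bounds on the projection degrees feed in with the correct sign. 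You have the right architecture---flag, projection degrees, Abel summation---but you should go back to \cite[\S2]{mum} to get the precise formula and sign convention; as written, the final inequality does not follow from the ingredients you list.
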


In the remainder of this section we assume $X$ to be an
irreducible smooth curve $C$ of genus $g>1$.

Let $(L,V)$ be a
generated linear series of type $(d,n+1)$ over $C$, that is,
the degree of $L$ is $d$ and $\dim V= n+1.$
Recall that the
pull-back, by $\phi _{L,V}$, of the dual of the
Euler sequence tensored by $ \mathcal{O}_{{\mathbb{P}^n}}(1)$
induces the following exact sequence
\begin{equation}\label{eq1}
0\to M_{L,V}\to V\otimes \mathcal{O}_C \to L
\to 0
\end{equation}
of vector bundles over $C$. The kernel $M_{L,V}$ of the
evaluation map  $V\otimes \mathcal{O}_C\to L$ is called
the {\it $(1^{st}-)$syzygy bundle} of $(L,V)$ (sometimes also a Lazarsfeld or Lazarsfeld-Mukai bundle or the dual span bundle). If
$V=H^0(C,L),$ we denote $ M_{L,V}$ by $
M_{L}$ and $\phi _{L,V}$ by $\phi
_{L}.$

In general, the syzygy bundles can be defined for any
variety $X$ and arise in a variety of algebraic and
geometric problems. For curves, the stability of $M^*_{L,V}$  has been
related to problems like: Green's Conjecture,
the Minimal Resolution Conjecture, computing Koszul cohomology groups,
theta-divisors, Brill-Noether theory and coherent system theory.
 In some sense, they have the information on how
complicated $X$ sits in $\mathbb{P}^n$. We use it now to prove Chow stability.

We can now prove Theorem \ref{teotan}

\bigskip

{\it Proof of Theorem \ref{teotan} } The proof is based on the following observation. For complex
irreducible smooth curves the
stability of $T\mathbb P_{|C} ^n$ is
equivalent to the stability of the syzygy bundle
$M_{L,V}$ since
\begin{equation}\label{eq0}
T\mathbb P_{|C} ^n= M_{L,V}^*\otimes
L.
\end{equation}

By assumption, $T\mathbb P_{|C} ^n$ is stable, hence $M_{L,V}$ is stable.

Recall that the stability of $M_{L,V}$ gives the following inequality
 \begin{equation}\label{eq2}
 \frac{\deg (F)}{rk \ F} <
 \frac{\deg (M_{L,V})}{rk \ M_{L,V}},\end{equation}
 for every proper subbundle $F\subset M_{L,V}.$
 In particular, for those subbundles that fit into the following diagram
 \begin{equation}\label{diag1}
\begin{array}{ccccccccc}
&& 0&& 0& &0&& \\
&& \downarrow&& \downarrow& &\downarrow&& \\
0&\rightarrow &M_{{L}', W}&\rightarrow& W\otimes
\mathcal{O}_C
& \rightarrow &{L}'&\rightarrow &0,\\
&& \downarrow&& \downarrow& &\downarrow&& \\
0&\rightarrow &M_{L,V}&\rightarrow& V\otimes
\mathcal{O}_C
& \rightarrow &{L}&\rightarrow &0\\
\end{array}
\end{equation}
 where $W\subset V$ is a linear subspace that generates the line bundle ${L}'$.
 Applying $(\ref{eq2})$ we deduce that
 \begin{equation}\label{eqdes}
 \frac{-\deg (L')}{\dim W -1}<\frac{-\deg (L)}{n}.
 \end{equation}

 Since $\deg C = \deg (L)$ and $\deg [(p_B(C)]=\deg (L')$ when $B= \mathbb{P}(Ann W)$,
$(\ref{eqdes})$ shows that $C \subset \mathbb{P}^n$ is linearly stable, by $(\ref{eqls})$,
and finally, from Theorem \ref{teocls}, that $C \subset \mathbb{P}^n$ is Chow stable,
and this is precisely the assertion of the theorem.
Similarly, we obtain Chow semistability.

$ \hfill{\Box} $

\begin{remark}\begin{em}\footnote{The relation of stability of $M_L$ with linear stability was also
proved recently in \cite{bstop}.}
The implications: Chow stability implies linear stability or linear stability implies stability of $M_{L,V}$ are not true in general. In \cite{ms} was proved the equivalence of linear stability and stability of $M_{L,V}$ for certain bounds given by the  Clifford index  of the curve C, and in  \cite{hugo} the second author prove that, for general curves, linear stability implies stability of $M_{L,V}$ if $(n-1)codim V<g$.
\end{em}\end{remark}

\begin{corollary}\label{corram} If $C$ is a non-hyperelliptic smooth curve then $C\subset \mathbb{P}(H^0(C,K_C)^*)$ is Chow stable.
\end{corollary}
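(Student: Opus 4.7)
The plan is to apply Theorem \ref{teotan} to the canonical embedding. Take $L = K_C$ and $V = H^0(C,K_C)$, so that $\phi_{L,V}$ is exactly the canonical embedding $C \hookrightarrow \mathbb{P}(H^0(C,K_C)^*) = \mathbb{P}^{g-1}$; note that $g\geq 3$, since there are no non-hyperelliptic smooth curves of genus $\leq 2$. By Theorem \ref{teotan}, to conclude Chow stability it suffices to show that $T\mathbb{P}^{g-1}_{|C}$ is a stable bundle on $C$.

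Using the identification (\ref{eq0}), we have $T\mathbb{P}^{g-1}_{|C} \cong M_{K_C}^{*}\otimes K_C$. Since stability of vector bundles on a smooth curve is preserved under dualizing and under twisting by a line bundle (both operations shift the slope by a fixed amount and map subbundles to subbundles of the same rank), the stability of $T\mathbb{P}^{g-1}_{|C}$ is equivalent to the stability of the syzygy bundle $M_{K_C}$. The corollary therefore reduces to the statement that $M_{K_C}$ is stable on any non-hyperelliptic curve.

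I would then invoke the classical theorem of Paranjape--Ramanan, which asserts precisely that $M_{K_C}$ is stable on any smooth non-hyperelliptic curve of genus $g\geq 3$. This input can be cited from the literature (and, being a statement about the stability of $T\mathbb{P}^n_{|C}$, fits naturally into Section $3$ of the paper, which is advertised as summarizing the relevant stability results for the restricted tangent bundle). Composing this with the reduction above gives the corollary in a few lines.

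The main obstacle is the last step: establishing stability of $M_{K_C}$ itself. The translation from Chow stability to stability of a syzygy (equivalently, restricted tangent) bundle is exactly what Theorem \ref{teotan} accomplishes, so once the Paranjape--Ramanan result is available, the rest is essentially a dictionary. In the structure of this paper, this corollary is best viewed as the simplest illustration of how Theorem \ref{teotan} converts a known bundle-stability statement into a Chow-stability statement.
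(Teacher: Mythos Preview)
Your proposal is correct and follows exactly the paper's own argument: invoke the Paranjape--Ramanan theorem that $M_{K_C}$ is stable for non-hyperelliptic $C$, then apply Theorem~\ref{teotan} (via the identification~(\ref{eq0})) to conclude Chow stability of the canonical embedding. Your version merely spells out the dictionary a bit more explicitly than the paper's two-line proof.
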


\begin{proof}
In \cite{ram} Paranjape and Ramanan proved that $M_{{K_C}}$ is stable if $C$ is non-hyperelliptic. Theorem \ref{teotan} now shows that $C\subset \mathbb{P}(H^0(C,K_C)^*)$ is Chow stable.
\end{proof}

In the next section we summarize the known cases were $T\mathbb P_{|C} ^n$ is stable.

 \section{Stability of $T\mathbb P_{|C} ^n$ }

Let $C$ be a complex
irreducible smooth curve of genus $g\geq 1$.
In this section we summarize without proofs the relevant material in
the stability of the syzygy bundles over $C$. For the proofs we refer the
reader to e.g. \cite{ln}, \cite{mio}, \cite{bbn1}, \cite{bstop} and \cite{bbn}.

It is well known (\cite[Proposition 3.2]{ein}) that for  smooth irreducible curves and
complete linear system, $M_L$ is stable if $d>2g$
and semistable if $d=2g$.
The reference \cite{came} includes an example to show
that these nice results for any curve do not extend beyond $d\geq 2g.$
For complete linear systems and general curves of lower degrees, $M_L$ is always semistable
(see \cite{sh}) and conditions for stability are given in
\cite[Theorem 2]{bu} (see also \cite{mio} and \cite{bbn1}).
The stability of $M_L$ was proved also for
\begin{enumerate}
\item some line bundles on special curves and on curves computing the
Clifford dimension (\cite{la}),
\item linear systems computing the Clifford index (\cite{la} and \cite{ms}),
\item line bundles with degree bounded with the Clifford index (\cite{came} and \cite{ms}).
\end{enumerate}

A breakthrough result has been achieved recently in \cite{bbn},
where the semistability of the syzygy bundle for non complete linear
series was proved and stability under some conditions for general and Petri curves.
Recall that a  smooth curve is called {\it Petri }
if for every line bundle $L$ on $C$, the cup product map
\begin{equation}\label{eqpetri} \mu : H^0(C,L)\otimes H^0(C,L^*\otimes K_C) \rightarrow H^0(C,K_C)
\end{equation}
is injective.

 The next corollary is a reformulation of the main results on
 syzygy bundles in terms of Chow stability for smooth irreducible curves. For a deeper discussion of the stability of the syzygy bundles we refer the reader to \cite{bbn}.

\begin{corollary}\label{teolow} Let $(L,V)$ be a generated linear series
of type $(d,n+1)$ over an irreducible smooth curve $C$ of genus $g\geq 1.$ Assume $C$ and $(L,V)$
are general and $d\ge g+n-\left\lfloor\frac{g}{n+1}\right\rfloor$. Then $\phi _{L,V}(C)\subset \mathbb{P}^n$ is Chow semistable.
Moreover, $\phi _{L,V}(C)\subset \mathbb{P}^n$ is Chow stable if one of the following conditions
\begin{enumerate}
\item $C$ is a general curve of genus $g\geq 2$ and $gcd(d,n)=1$;
\item $C$ is a curve of genus $g=1, d\geq n+1$ and $gcd(d,n)=1;$
\item $C$ is a curve of genus $g=2, d\geq n+2$ with $d\ne 2n$;
\item $C$ is a Petri curve of genus $g\geq 3$ and $n\leq 4;$
\item $C$ is a Petri curve of genus $g\geq 2(n-2)$ and $n \geq 5;$
\end{enumerate}
is satisfied.
\end{corollary}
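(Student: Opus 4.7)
The plan is essentially to package the known stability results for the syzygy bundle $M_{L,V}$ into the Chow-stability framework established in Section~2. By the identity \eqref{eq0}, namely $T\mathbb{P}^n_{|C}=M_{L,V}^{*}\otimes L$, tensoring by a line bundle does not affect (semi)stability, so $T\mathbb{P}^n_{|C}$ is (semi)stable if and only if $M_{L,V}$ is. Consequently, Theorem~\ref{teotan} reduces the corollary to showing that under each of the listed hypotheses the syzygy bundle $M_{L,V}$ is (semi)stable. This is exactly the content of the recent literature on Butler's Conjecture.

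First I would dispatch the semistability statement. The hypothesis $d\ge g+n-\left\lfloor\tfrac{g}{n+1}\right\rfloor$ is equivalent to the Brill–Noether number $\rho(g,d,n+1)$ being non-negative, which is the natural range in which generated linear series of type $(d,n+1)$ exist on a general curve. The main theorem of \cite{bbn} asserts semistability of $M_{L,V}$ for a generic pair $(C,(L,V))$ in this range, and invoking Theorem~\ref{teotan} immediately gives Chow semistability of $\phi_{L,V}(C)$.

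Next I would run through the five numbered cases for Chow stability, each of which is a quotation of an existing stability criterion for $M_{L,V}$. For (1), generic curves of genus $g\ge 2$ with $\gcd(d,n)=1$ are covered by the stability statement in \cite{bbn} (the coprimality hypothesis rules out the destabilising rational slopes that can arise at the boundary of the semistable range). Cases (2) and (3) are the low-genus refinements, obtained either by elementary arguments on elliptic curves (where $M_{L,V}$ is a direct sum of line bundles of equal degree precisely when $\gcd(d,n)=1$) or by the genus-two analyses in \cite{mio}, \cite{bbn1}. Cases (4) and (5) are the Petri-curve strengthenings proved in \cite{bbn} and \cite{bstop}, where the Petri hypothesis is used to guarantee the injectivity \eqref{eqpetri} needed to control the cohomology of potential destabilising subbundles. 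In each case stability of $M_{L,V}$ followed by Theorem~\ref{teotan} yields Chow stability of $\phi_{L,V}(C)$.

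The main obstacle is not really in our hands: it lies in establishing the stability of $M_{L,V}$ in the non-complete, low-degree range, which is the deep content of \cite{bbn}. Our work reduces to carefully translating hypotheses: checking that the conditions on $(d,n,g)$ listed in (1)--(5) match, line by line, the hypotheses under which stability of the syzygy bundle has been proved, and that in each regime the linear series $(L,V)$ is indeed generated so that the exact sequence \eqref{eq1} and hence the identification \eqref{eq0} apply. Once this bookkeeping is complete, the conclusion is a direct application of Theorem~\ref{teotan}.
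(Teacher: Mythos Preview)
Your approach is exactly the paper's: reduce via Theorem~\ref{teotan} and the identification \eqref{eq0} to (semi)stability of $M_{L,V}$, then quote the literature case by case. Two small corrections on the bookkeeping: your parenthetical on elliptic curves is backwards (when $\gcd(d,n)=1$ the bundle $M_{L,V}$ is indecomposable and hence stable, not a direct sum of line bundles; the relevant reference is \cite{ln}), and for cases~(4)--(5) the paper invokes \cite[\S7]{bbn1} and \cite[Theorem~6.1]{bbn} rather than \cite{bstop}.
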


\begin{proof} From Theorem \ref{teotan} we only need to show that, under the above hypothesis,
the syzygy bundle $M_{L,V}$ is semistable.

The semistability of $M_{L,V}$ was proved in \cite[Theorem 5.1]{bbn} for a general $C$ and $(L,V)$.
(see also \cite{sh} for complete linear systems). Case $(1)$ follows immediately from the equality $gcd(d,n)=1$.

For $(2),(3)$, the stability
of $M_{L,V}$ follows from \cite{ln}, if $C$ is a curve of genus
$g=1, d\geq n+1$ and $gcd(d,n)=1$, and from \cite[Proposition 6.5]{mio} and
\cite[Theorem 8.2]{bbn1}, if $C$ is a curve of genus $g=2, d\geq n+2$ with $d\ne 2n$.

Let  $C$ be a Petri curve of genus $g\geq 3.$
The stability of $M_{L,V}$ was proved in \cite[\S 7]{bbn1}
when  $n\leq 4$ and in \cite[Theorem 6.1.]{bbn} when $n\geq 5$ and $g\geq 2(n-2).$

From what has already been proved (see Theorem \ref{teotan})
it follows that $\phi _{L,V}(C)\subset \mathbb{P}^n$ is Chow stable, which is the desired conclusion.
\end{proof}

\begin{remark}\label{remsmall}\begin{em} Note that in the above cases
we can have $C\subset \mathbb{P}^n$ with $n\ne d-g.$
\end{em}\end{remark}

\section{The Hilbert Scheme}

Let $Hilb_{P(t)}{{\mathbb{P}^n}}$ be the Hilbert scheme of
$\mathbb{P}^n$ with Hilbert polynomial
$P(t)= dt+1-g$. Recall that $Hilb^{P(t),s}_{Ch}$ is the irreducible component of $Hilb^{P(t)}_{{\mathbb{P}^n}}$ containing
the generic $[C \subset \mathbb{P}^n ] \in Hilb^{P(t)}_{{\mathbb{P}^n}}$
such that the Chow form $[F_C]$ is Chow stable. Respectively $Hilb^{P(t),ss}_{Ch}$ is the component of Chow semistable curves.
A component of $Hilb_{P(t)}{{\mathbb{P}^n}}$ is said to be regular if its general point corresponds to
a smooth irreducible curve $C\subset \mathbb P$  with $H^1(C,N_{{C/\mathbb{P}^n}})=0.$

Fix $P(t)=dt+1-g$. From the Brill-Noether theory we know that
there are no general curves in $Hilb_{P(t)}{{\mathbb{P}^n}}$ if $d< g+n - \left\lfloor\frac{g}{n+1}\right\rfloor$ and only a finite number (the Castelnuovo number) of general curves if $d=g+n - \left\lfloor\frac{g}{n+1}\right\rfloor .$ Moreover, from \cite{la} and \cite{ms} and Theorem \ref{teotan} such curves are Chow semistable.
In this section we
are interested in describing a smooth open set of
$Hilb^{P(t),s}_{Ch}$ for $d-g>n-\left\lfloor\frac{g}{n+1}\right\rfloor$.
For a treatment of the case $n= d-g$ and $d>>0$  we refer the reader to \cite{melo}.

We introduce the notions of the basic varieties of the Brill-Noether theory for moving curves, following \cite[Chapter XXI]{arb2}.

Let $p:\mathcal{C}\to S$ be a family of smooth curves of genus $g\geq 2$ parameterized by a scheme S.
A family of $g^n_d$'s on $p:\mathcal{C}\to S$ parameterized by an $S$-scheme $f:T\to S$ is a pair $(\mathcal{L},\mathcal{V})$, where $\mathcal{L}$ is a line bundle over $\mathcal{C} \times _S T$ whose restriction $\mathcal{L}_t$ to each fibre of $\mathcal{C} \times _S T\to T$ has degree $d$, and $\mathcal{V}$ is a locally free subsheaf of $(p_{T})_*(\mathcal{L})$ of rank $n+1$ such that, for each $t\in T$, the restriction $\mathcal{V}_t$ is a subspace of $H^0(p^{-1}(t), \mathcal{L}_t)$. Two families $(\mathcal{L},\mathcal{V})$ and $(\mathcal{L}',\mathcal{V}')$ are equivalent if there is a line bundle $\mathcal{N}$ on $T$ and an isomorphism $k:\mathcal{L}\stackrel{\cong}{\to}\mathcal{L}'\otimes\mathcal{N}$ which induces an isomorphism $k_*:\mathcal{V}\stackrel{\cong}{\to}\mathcal{V}'\otimes\mathcal{N}.$

\begin{remark}\label{remeqfam}\begin{em} Fix a projective space $\mathbb{P}^n$. Let $[(\mathcal{L},\mathcal{V})]$ be an equivalence
class of $g^n_d$'s families on $p:\mathcal{C}\to S$ parameterized by an $S$-scheme $f:T\to S$. For any
 $(\mathcal{L},\mathcal{V})\in [(\mathcal{L},\mathcal{V})]$
 the vector bundle associated to the locally free subsheaf $\mathcal{V}$ defines a  $PGL_n$-principal bundle $\mathcal{B}_d^n(V)$  with fibre
$ \{\alpha :\mathbb{P}(V^*_t)\to \mathbb{P}^n: \alpha \ \ \mbox{is an isomorphism} \}$.
 This $PGL_n$-principal bundle is independent of which member of $[(\mathcal{L},\mathcal{V})]$ we choose to define it. Indeed, for any other $(\mathcal{L}',\mathcal{V}') \in [(\mathcal{L},\mathcal{V})]$, $\mathcal{V}\stackrel{\cong}{\to}\mathcal{V}'\otimes\mathcal{N}$ and hence the $PGL_n$-principal bundles $\mathcal{B}_d^n(V)$  and $\mathcal{B}_d^n(V')$  are the same.
In this way we obtain what will be referred to as the $PGL_n$-principal bundle of $[(\mathcal{L},\mathcal{V})]$, and will be denoted by $g:\mathcal{B}_d^n[(\mathcal{L},\mathcal{V})]\to T$.
\end{em}\end{remark}

Suppose now that $p:\mathcal{C}\to S$ admits a section. From \cite[Chapter XXI, Theorem 3.13]{arb2} there exists
an $S$-scheme $h:\mathcal{G}_d^n(p)\to S$ representing the functor

 \[
 \begin{matrix}
  G_d^n(p):Sch&\longrightarrow& Sets \hspace{4.5cm}  \\
T&\longmapsto &\left\{\begin{smallmatrix}
\mbox{equivalence classes of families of} \ \   g^{n}_{d} \  's \ \ \mbox{on} \\
p: \ \mathcal{C}\to S \  \  \mbox{parametrized by} \ \ T \hspace{2.5cm}
\end{smallmatrix}\right\}.
\end{matrix}
\]

\bigskip
Set theoretically $$supp(\mathcal{G}_d^n(p))= \{(s,(L,V)): s\in S, \ (L,V)\in G^n_d(C_s) \},$$
where  $G_d^n(C_s)$ is the variety
of linear series $(L,V)$ of type $(d,n+1)$ on $C_s.$
Let $(\mathcal{L},\mathcal{U})$ be the universal family of $g^{n}_{d}$'s on $p:\mathcal{C}\to S$ parameterized by $\mathcal{G}_d^n(p)$. For simplicity of notation,  we
write $g:\mathcal{B}_d^n(p)\to \mathcal{G}_d^n(p) $ instead of $g:\mathcal{B}_d^n[(\mathcal{L},\mathcal{V})]\to \mathcal{G}_d^n(p)$, when no confusion can arise.
We will represent any element $z\in \mathcal{B}_d^n(p)$ by the triple
$$z:=(C,(L,V), \alpha : \mathbb{P}(V^*)\to \mathbb{P}^n )$$with $(L,V)\in G_d^n(C)$ where $C=C_s$ and $h(g(z))=s$.

Denote by $M_g$ the moduli space of smooth curves of genus $g>2$ and by $M^0_g$
the moduli space of automorphisms-free smooth curves.
We follow  \cite{hm}, or \cite{arb2}, in assuming that $M^0_g$ is a fine moduli space. Thus,
there exists a universal family $p_0:\mathcal{C}\to M^0_g$. However, $p_0:\mathcal{C}\to M^0_g$ admits no section, hence, there is no universal family $(\mathcal{L},\mathcal{V})$. Recall that the functor $G_d^n(p_0)$ is representable only when the family has a section. Nevertheless, we will now show how to dispense with the assumption on the existence of a global section.

This theorem ensures the existence of the $PGL_n-$principal bundle $g:\mathcal{B}_d^n(p_0) \to \mathcal{G}_d^n(p_0)$,
and may be proved in much the same way as the existence of $\mathcal{G}_d^n(p_0)$ (see \cite[Chapter XXI, \S 3]{arb2}]). This construction is adapted from \cite[Chapter XXI, \S 2 and 3]{arb2} and we give only the main ideas of the proof.

\begin{theorem}\label{teoprinbund}   There exists a scheme $\mathcal{G}_d^n(p_0)$ parameterizing all
$ g^{n}_{d} $'s on the fibres of $p_0:\mathcal{C}\to M^0_g.$ Moreover, there exists a $PGL_n$-principal bundle $g:\mathcal{B}_d^n(p_0) \to \mathcal{G}_d^n(p_0) $
with fibre $ \{\alpha :\mathbb{P}(V^*)\to \mathbb{P}^n: \alpha \in PGL_n  \}$ at $w=(s,(L,V))\in \mathcal{G}_d^n(p_0).$
\end{theorem}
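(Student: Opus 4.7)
The plan is to reduce to the statement of \cite[Chapter XXI, Theorem 3.13]{arb2} by \'etale descent, the key observation being that the functor of $g^n_d$'s is intrinsically defined and only its representability in loc.\ cit.\ invokes a section. Since $p_0:\mathcal{C}\to M^0_g$ is smooth of relative dimension one, it admits a section after an \'etale base change; so I would choose an \'etale cover $\{u_i:U_i\to M^0_g\}$ such that every pullback $p_i:\mathcal{C}_i:=\mathcal{C}\times_{M^0_g}U_i\to U_i$ carries a section $\sigma_i$.

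On each $U_i$, \cite[Chapter XXI, Theorem 3.13]{arb2} yields a scheme $h_i:\mathcal{G}_d^n(p_i)\to U_i$ representing the functor $G_d^n(p_i)$. Because this functor is defined purely in terms of equivalence classes of pairs $(\mathcal{L},\mathcal{V})$ after base change, with $(\mathcal{L},\mathcal{V})\sim(\mathcal{L}',\mathcal{V}')$ whenever they differ by twisting by the pullback of a line bundle from the base, its definition is independent of any chosen section. Hence on each double overlap $U_{ij}:=U_i\times_{M^0_g}U_j$ the pullbacks of $\mathcal{G}_d^n(p_i)$ and $\mathcal{G}_d^n(p_j)$ represent the same functor and are canonically isomorphic, and the cocycle condition on triple overlaps holds by uniqueness of representing objects. \'Etale descent then produces the desired $h:\mathcal{G}_d^n(p_0)\to M^0_g$, with set-theoretic fibre $G^n_d(C_s)$ over $s\in M^0_g$.

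For the principal bundle, I would first work over each $U_i$. Let $(\mathcal{L}_i,\mathcal{U}_i)$ be the universal family on $\mathcal{C}_i\times_{U_i}\mathcal{G}_d^n(p_i)$ provided by representability, and form the projective bundle $\mathbb{P}(\mathcal{U}_i^*)\to\mathcal{G}_d^n(p_i)$. Define $\mathcal{B}_d^n(p_i):=\mathrm{Isom}_{\mathcal{G}_d^n(p_i)}\bigl(\mathbb{P}(\mathcal{U}_i^*),\,\mathbb{P}^n\times\mathcal{G}_d^n(p_i)\bigr)$, whose fibre at $w=(s,(L,V))$ is $\{\alpha:\mathbb{P}(V^*)\to\mathbb{P}^n:\alpha\in PGL_n\}$; this is a $PGL_n$-principal bundle over $\mathcal{G}_d^n(p_i)$. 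By Remark \ref{remeqfam}, the projective bundle $\mathbb{P}(\mathcal{U}_i^*)$, and hence $\mathcal{B}_d^n(p_i)$, depends only on the equivalence class of the universal family, so on overlaps the two local constructions are canonically isomorphic. A second application of \'etale descent then produces $g:\mathcal{B}_d^n(p_0)\to\mathcal{G}_d^n(p_0)$.

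The main obstacle is verifying the cocycle condition for the descent data, particularly for $\mathcal{B}_d^n$: for $\mathcal{G}_d^n$ it is automatic from uniqueness of the representing scheme, but for $\mathcal{B}_d^n$ one must check that the canonical isomorphisms on $U_{ij}$ agree on triple overlaps. This reduces precisely to the content of Remark \ref{remeqfam}, which shows that the $PGL_n$-principal bundle attached to a family $(\mathcal{L},\mathcal{V})$ is genuinely an invariant of the equivalence class $[(\mathcal{L},\mathcal{V})]$, so the comparison isomorphisms are unique and functorial, yielding a trivial cocycle.
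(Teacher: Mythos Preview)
Your proposal is correct and follows essentially the same strategy as the paper: construct $\mathcal{G}_d^n$ and $\mathcal{B}_d^n$ locally where a section exists via \cite[Chapter XXI, Theorem 3.13]{arb2}, then glue using the intrinsic nature of the functor and Remark~\ref{remeqfam}. The only cosmetic difference is that the paper first produces an analytic variety from local Kuranishi sections and then upgrades to a scheme structure via Zariski opens plus finite \'etale base changes and \cite[Chapter XXI, Lemma 2.12]{arb2}, whereas you work directly with \'etale descent throughout.
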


\begin{proof} It suffices to make the following observation. We can cover $M^0_g$ with open sets $\{U_\alpha\}$ where a section of $p|_{U_\alpha}:=(p_0)|_{U_\alpha}:\mathcal{C}_{U_\alpha}\to U_\alpha$ exists. Recall that a local Kuranishi family of smooth curves admits an analytic section. Thus, the functor $G_d^n(p|_{U_\alpha})$ is representable by $\mathcal{G}_d^n(p|_{U_\alpha})$ with  $(\mathcal{L}(U_\alpha),\mathcal{U}(U_\alpha))$ as the universal family of $g^{n}_{d}$'s on $p|_{U_\alpha}:\mathcal{C}_{U_\alpha}\to U_\alpha$ parameterized by $\mathcal{G}_d^n(p|_{U_\alpha})$. One then constructs $\mathcal{G}_d^n(p|_{U_\alpha})$ and uses the universal properties to patch together the $\mathcal{G}_d^n(p|_{U_\alpha})$'s to define an analytic variety $\mathcal{G}_d^n(p_0)$. To prove the existence of $\mathcal{G}_d^n(p_0)$ in the category of schemes, use a Zariski-open neighborhood $U_\alpha$ for each point in $M^0_g$ and a finite \'etale
 base change $f:U_\alpha '\to U_\alpha$ such that the family $p'|_{U_\alpha}:f^*(\mathcal{C}_{U_\alpha})\to U_\alpha '$ has a section. For each $U_\alpha$ there is a natural projection map of the analytic spaces $\mathcal{G}_d^n(p'|_{U_\alpha '})\to \mathcal{G}_d^n(p|_{U_\alpha} )$. From \cite[Chapter XXI, Lemma 2.12]{arb2}, the $\mathcal{G}_d^n(p|_{U_\alpha})$'s have a scheme structure which, using the universal properties, patch together to define a scheme structure in  $\mathcal{G}_d^n(p_0)$.

 What is left is to show that exists the $PGL_n$-principal bundle. Since the functor $G_d^n(p|_{U_\alpha})$ is representable  for the family $p|_{U_\alpha}:=(p_0)|_{U_\alpha}:\mathcal{C}_{U_\alpha}\to U_\alpha$,
  one  constructs the $PGL_n$-principal bundle $\mathcal{B}_d^n(p|_{U_\alpha}) \to \mathcal{G}_d^n(p|_{U_\alpha}) $ over $\mathcal{G}_d^n(p|_{U_\alpha})$ from the universal family $(\mathcal{L}({U_\alpha}),\mathcal{U}({U_\alpha}))$. From Remark \ref{remeqfam}, are well defined, and one uses the universal properties to patch them together. By a similar argument, the existence of $g:\mathcal{B}_d^n(p_0) \to \mathcal{G}_d^n(p_0) $ in the category of schemes follows from  \cite[Chapter XXI, Lemma 2.12]{arb2}.
  \end{proof}

An important open
sublocus  of $M^0_g$ is the one whose points represent Petri curves without automorphisms.
It is well known that for Petri curves $C$,
$G_d^n(C)$ is empty if the Brill-Noether number $ \rho (g,d,n+1):= g-(n+1)(n-d+g)$
is negative and is irreducible and smooth of dimension $\rho (g,d,n+1)$ if $\rho (g,d,n+1) > 0.$
The following Proposition may be proved in much the same way as the above results.

\begin{proposition}\label{propg}(\cite[Proposition 5.26 and Corollary 5.30]{arb2})
$\mathcal{G}_d^n (p_0) $ is empty if
the Brill-Noether number $\rho (g,d,n+1)= g-(n+1)(n-d+g)$ is negative
and is irreducible and smooth of dimension $3g-3 +\rho$ if $\rho (g,d,n+1)> 0.$
\end{proposition}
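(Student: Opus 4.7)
The plan is to analyze the natural projection
\begin{equation*}
h:\mathcal{G}_d^n(p_0)\longrightarrow M^0_g,\qquad (C,(L,V))\longmapsto [C],
\end{equation*}
whose scheme-theoretic fiber over $[C]$ is the classical Brill--Noether variety $G^n_d(C)$, and to combine the fiberwise theory over the Petri open subscheme $(M^0_g)^{\mathrm{Petri}}\subset M^0_g$ (which is open and dense by Gieseker's theorem that the generic curve is Petri) with a globalization argument. Since $M^0_g$ is itself smooth and irreducible of dimension $3g-3$, the dimension, smoothness and irreducibility claims for $\mathcal{G}_d^n(p_0)$ will reduce to the corresponding properties of the fibers of $h$.

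The first step is purely fiberwise. For a Petri curve $C$, the Zariski tangent space to $G^n_d(C)$ at a point $(L,V)$ is canonically identified with the kernel of the Petri map (\ref{eqpetri}) restricted to $V\subseteq H^0(C,L)$; injectivity of this map on Petri curves is precisely the Gieseker--Petri theorem, which yields that $G^n_d(C)$ is empty when $\rho=\rho(g,d,n+1)<0$ and smooth of pure dimension $\rho$ when $\rho\geq 0$. When $\rho>0$, the Fulton--Lazarsfeld connectedness theorem upgrades smoothness to irreducibility. Consequently, the restriction of $h$ to $h^{-1}((M^0_g)^{\mathrm{Petri}})$ is smooth with smooth irreducible fibers of dimension $\rho$, so that open subscheme is smooth and irreducible of dimension $3g-3+\rho$ when $\rho>0$ and empty when $\rho<0$.

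The second step is to extend these conclusions to the whole of $\mathcal{G}_d^n(p_0)$. For this I would invoke the local realization of $\mathcal{G}_d^n(p_0)$ as a degeneracy locus of a morphism of vector bundles on a Grassmann bundle over the relative Picard scheme, as constructed in \cite[Chapter XXI, \S 5]{arb2}. The expected codimension of this degeneracy locus is $(n+1)(g-d+n)$, so by the Macaulay bound every irreducible component of $\mathcal{G}_d^n(p_0)$ has dimension at least $3g-3+\rho$. For $\rho>0$, combined with the previous step this forces the closure of $h^{-1}((M^0_g)^{\mathrm{Petri}})$ to be the unique irreducible component and to equal $\mathcal{G}_d^n(p_0)$; smoothness at every point then follows from the same fiberwise tangent-space identification, since the Petri condition spreads out and the obstruction spaces vanish where needed. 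For $\rho<0$, the emptiness over the Petri open combined with the determinantal dimension bound and the irreducibility of $M^0_g$ forces $\mathcal{G}_d^n(p_0)$ itself to be empty.

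The main obstacle will be the extension across the non-Petri locus, where the fibers of $h$ can jump in dimension. Controlling the scheme structure and smoothness of $\mathcal{G}_d^n(p_0)$ at such points requires the cohomology-and-base-change framework on the universal Picard scheme, together with the Fitting-ideal/determinantal description of $G^n_d$; this is precisely the technical content of \cite[Proposition 5.26 and Corollary 5.30]{arb2} to which one appeals, rather than redoing the construction here.
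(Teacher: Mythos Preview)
The paper itself gives no proof of this proposition: it simply says the result ``may be proved in much the same way as the above results'' and quotes \cite[Proposition 5.26 and Corollary 5.30]{arb2}. So your sketch is already far more detailed than anything the paper does, and your overall strategy (fiberwise Brill--Noether theory over the Petri open, plus the determinantal description from \cite[Chapter XXI, \S 5]{arb2} to globalize) is exactly the content of the reference being cited. In that sense there is nothing to compare: you are unpacking the citation that the paper merely invokes.

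That said, there is a genuine gap in your argument for the case $\rho<0$. You write that ``the emptiness over the Petri open combined with the determinantal dimension bound and the irreducibility of $M^0_g$ forces $\mathcal{G}_d^n(p_0)$ itself to be empty.'' This does not follow. The determinantal description gives only a \emph{lower} bound $\dim Z\geq 3g-3+\rho$ on each component $Z$; when $\rho<0$ this is vacuous and puts no constraint at all. Nothing you have said prevents a component of $\mathcal{G}_d^n(p_0)$ from lying entirely over the non-Petri locus, and indeed such components exist: for instance, a general tetragonal curve of genus $7$ is automorphism-free, so it lies in $M^0_7$, yet it carries a $g^1_4$ while $\rho(7,4,2)=-1$. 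The same issue undermines your claim that for $\rho>0$ smoothness ``spreads'' to every point of $\mathcal{G}_d^n(p_0)$: over a non-Petri curve the fiber $G^n_d(C)$ can be singular or of the wrong dimension, and your determinantal lower bound does not rule out extra components sitting over that locus.

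What the cited results in \cite{arb2} actually establish (and what the paper needs for Corollary~\ref{corpetri} and everything afterwards) concerns the restriction to the Petri locus, where your first step already gives the full answer. Your honest concluding paragraph essentially concedes this: the ``main obstacle'' you name is not resolved by your argument but deferred back to the reference, which is precisely what the paper does. So the sketch is fine as an outline of the cited proof over $(M^0_g)^{\mathrm{Petri}}$, but you should not claim emptiness (for $\rho<0$) or global smoothness (for $\rho>0$) over all of $M^0_g$ from the ingredients you list.
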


Let us denote by  $\mathcal{P}_d^n(p_0) \subset \mathcal{G}_d^n(p_0)$  the
subscheme of general linear series over Petri curves without automorphisms.

\begin{corollary}\label{corpetri} Let $g\geq 4$. If $d-g>n-\left\lfloor\frac{g}{n+1}\right\rfloor$ then $\mathcal{P}_d^n(p_0) $ is open subscheme of
$\mathcal{G}_d^n (p_0)$.
 Moreover, $\mathcal{P}_d^n(p_0) $
is irreducible and smooth of dimension $3g-3 +\rho$.
\end{corollary}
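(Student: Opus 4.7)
The plan is to deduce the corollary directly from Proposition \ref{propg} together with the openness of the Petri and automorphism-free conditions in the moduli of curves.

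First I would check that the hypothesis $d-g>n-\left\lfloor\frac{g}{n+1}\right\rfloor$ forces $\rho(g,d,n+1)>0$. Since $n-d+g$ is an integer and $\lfloor g/(n+1)\rfloor\leq g/(n+1)$, the hypothesis gives $n-d+g<g/(n+1)$, hence $(n+1)(n-d+g)<g$ and therefore $\rho=g-(n+1)(n-d+g)>0$. Proposition \ref{propg} then yields that $\mathcal{G}_d^n(p_0)$ is irreducible and smooth of dimension $3g-3+\rho$.

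The second step is to exhibit $\mathcal{P}_d^n(p_0)$ as a non-empty open subscheme of $\mathcal{G}_d^n(p_0)$. Let $h:\mathcal{G}_d^n(p_0)\to M^0_g$ denote the structural morphism. The Petri locus $M^{Pet}_g\subseteq M_g$, consisting of curves on which the Petri map (\ref{eqpetri}) is injective for every line bundle $L$, is a Zariski open; by Gieseker's theorem it is non-empty, and for $g\geq 4$ it intersects the automorphism-free locus $M^0_g$ in a non-empty open set $M^{Pet}_g\cap M^0_g$. Its preimage $h^{-1}(M^{Pet}_g\cap M^0_g)$ is then open in $\mathcal{G}_d^n(p_0)$, and intersecting it with any further open conditions on $(L,V)$ implicit in ``general linear series'' (e.g. that the evaluation map $V\otimes\mathcal{O}_C\to L$ is surjective and the resulting map to $\mathbb{P}^n$ is a closed immersion, both being open by semicontinuity in families) yields $\mathcal{P}_d^n(p_0)$.

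For non-emptiness I would apply the Brill--Noether theorem for Petri curves: when $\rho>0$ and $C$ is Petri, $G_d^n(C)$ is irreducible and smooth of dimension $\rho$, so the fibre of $h$ over any point of $M^{Pet}_g\cap M^0_g$ is non-empty and irreducible, and its generic members satisfy the open conditions fixed above. Consequently $\mathcal{P}_d^n(p_0)$ is a non-empty open subscheme of the irreducible smooth scheme $\mathcal{G}_d^n(p_0)$, and therefore inherits irreducibility, smoothness, and dimension $3g-3+\rho$.

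The main obstacle I anticipate is pinning down precisely which open condition ``general'' selects on the fibres of $h$ and verifying that this condition is truly open in families; once that bookkeeping is handled, the rest is a routine combination of Proposition \ref{propg} with the classical openness of the Petri and automorphism-free loci.
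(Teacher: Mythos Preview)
Your proposal is correct and is precisely the kind of argument the paper has in mind: the paper's own proof consists of the single sentence ``The proof is straightforward,'' and your write-up simply unpacks that, verifying $\rho>0$ from the numerical hypothesis and then invoking Proposition~\ref{propg} together with the standard openness of the Petri, automorphism-free, and very-ample conditions.
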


 \begin{proof} The proof is straightforward.
 \end{proof}

Denote by $g:\mathcal{B}_d^n \to \mathcal{P}_d^n(p_0)$ the restriction of the $\mathcal{B}_d^n(p_0) \to \mathcal{G}_d^n(p_0) $ to $\mathcal{P}_d^n(p_0) $.
That is,
set theoretically,
$$supp(\mathcal{B}_d^n)=\{(s,(L,V),\alpha :\mathbb{P}(V^*)\to \mathbb{P}^n ) :s\in S, \ \mbox{and} \hspace{6cm}  $$
$$ (L,V)\in G_d^n(C_s) \  \mbox{where} \ \phi _{L,V}\ \mbox{is an embedding}  \ \mbox{and} \ C_s \ \mbox{is Petri without automorphisms} \}.$$

The relation between $\mathcal{B}_d^n $ and $Hilb^{P(t),s}_{Ch}$ is established by our next Theorem.

\begin{theorem}\label{teochow11} Let $g,d$ and $n$ be positive integers such that
$g\geq 4$ and $d-g> n-\left\lfloor\frac{g}{n+1}\right\rfloor .$ There exists a natural injective morphism $\Gamma: \mathcal{B}_d^n \to Hilb_{P(t)}{{\mathbb{P}^n}},$ such that
$\Gamma (\mathcal{B}_d^n)\subset Hilb^{P(t),ss}_{Ch}.$ Moreover,
 $Hilb^{P(t),s}_{Ch}\ne \emptyset$ if one of the conditions in Theorem \ref{teolow} is satisfied.
\end{theorem}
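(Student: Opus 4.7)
The plan is to construct $\Gamma$ by \'etale descent mirroring the proof of Theorem \ref{teoprinbund}, verify injectivity using the automorphism-freeness of Petri curves, show the (necessarily irreducible) image lies in $Hilb^{P(t),ss}_{Ch}$, and finally exhibit a Chow stable point via Theorem \ref{teolow}. For the construction, cover $M^0_g$ by Zariski opens $U_\alpha$ admitting \'etale covers $f_\alpha:U_\alpha'\to U_\alpha$ on which $p_0$ pulls back to a family $p'|_{U_\alpha'}$ with a section. Over $\mathcal{B}_d^n(p'|_{U_\alpha'})$ the pullback of the universal linear series $(\mathcal{L}(U_\alpha'),\mathcal{U}(U_\alpha'))$ together with the tautological trivialization $\alpha:\mathbb{P}(\mathcal{U}^*)\to\mathbb{P}^n$ determines a relative closed immersion of the universal family of curves into $\mathbb{P}^n$; the universal property of the Hilbert scheme then yields a morphism $\Gamma_\alpha:\mathcal{B}_d^n(p'|_{U_\alpha'})\to Hilb_{P(t)}{{\mathbb{P}^n}}$. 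By Remark \ref{remeqfam} this morphism is independent of the chosen representative of the equivalence class $[(\mathcal{L},\mathcal{U})]$ (twisting by a line bundle pulled back from the base leaves the projective embedding unchanged), so the $\Gamma_\alpha$ descend along $f_\alpha$ and glue on overlaps to a global $\Gamma:\mathcal{B}_d^n\to Hilb_{P(t)}{{\mathbb{P}^n}}$.

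For injectivity, suppose $\Gamma(z_1)=\Gamma(z_2)$ with $z_i=(C_{s_i},(L_i,V_i),\alpha_i)$. The closed subschemes $\alpha_i\circ\phi_{L_i,V_i}(C_{s_i})\subset\mathbb{P}^n$ coincide, so their abstract isomorphism class determines a unique point of $M^0_g$ and forces $s_1=s_2=:s$; because the Petri curves parameterized by $\mathcal{P}_d^n$ have no nontrivial automorphisms, the identification of $C_{s_1}$ with $C_{s_2}$ through this common embedded curve is canonical. Then $L=\mathcal{O}_{\mathbb{P}^n}(1)|_C$ and $V\subset H^0(C,L)$ is the image of $H^0(\mathbb{P}^n,\mathcal{O}(1))$, recovering $(L_1,V_1)=(L_2,V_2)$, and $\alpha$ is uniquely determined by the relation $\alpha\circ\phi_{L,V}=$ given embedding. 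For the component assertion, Corollary \ref{corpetri} gives irreducibility of $\mathcal{P}_d^n$ under the hypothesis $d-g>n-\lfloor g/(n+1)\rfloor$, so the $PGL_n$-principal bundle $\mathcal{B}_d^n$ is also irreducible and hence $\Gamma(\mathcal{B}_d^n)$ is irreducible. By Corollary \ref{teolow} each of its points corresponds to a Chow semistable embedded curve, so $\overline{\Gamma(\mathcal{B}_d^n)}$ is an irreducible closed subset of the (open) Chow-semistable locus of the Hilbert scheme and therefore lies in a single irreducible component. By the definition of $Hilb^{P(t),ss}_{Ch}$ as the component containing the generic Chow-semistable curve, this component must be $Hilb^{P(t),ss}_{Ch}$.

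Finally, to see $Hilb^{P(t),s}_{Ch}\neq\emptyset$ under any of the conditions (1)--(5) of Theorem \ref{teolow}, pick a Petri curve $C$ without automorphisms and a general $(L,V)\in G_d^n(C)$ satisfying that condition; the syzygy bundle $M_{L,V}$ is then stable, so by Theorem \ref{teotan} the embedded curve $\phi_{L,V}(C)\subset\mathbb{P}^n$ is Chow stable, and any $z\in\mathcal{B}_d^n$ representing such a triple gives $\Gamma(z)\in Hilb^{P(t),s}_{Ch}$. The principal obstacle I anticipate is assembling the global morphism $\Gamma$ in the absence of a global section for $p_0$; this is resolved by the \'etale patching argument combined with the invariance recorded in Remark \ref{remeqfam}. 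A secondary subtlety is pinning down the exact irreducible component containing the image, which reduces to the irreducibility of $\mathcal{B}_d^n$ furnished by Corollary \ref{corpetri}.
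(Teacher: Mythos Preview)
Your proof is correct and reaches the same conclusions through the same essential ingredients (universal property of the Hilbert scheme, irreducibility from Corollary~\ref{corpetri}, and Chow (semi)stability from Corollary~\ref{teolow}/Theorem~\ref{teotan}), but you take a more elaborate route to construct $\Gamma$. The paper proceeds directly: once $\mathcal{B}_d^n$ has been built in Theorem~\ref{teoprinbund}, it simply writes down the incidence variety $\mathcal{F}=\{(z,t)\in\mathcal{B}_d^n\times\mathbb{P}^n:t\in\alpha(\phi_{L,V}(C))\}$ as a flat family over $\mathcal{B}_d^n$ and invokes the universal property of the Hilbert scheme once, globally. You instead rebuild $\Gamma$ by \'etale descent, repeating the patching argument that already went into the construction of $\mathcal{B}_d^n$. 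Your approach has the virtue of making explicit why the family of embedded curves exists globally (the paper's incidence-variety description is terse on this point), and your treatment of injectivity and of why the image lands in a single irreducible component is more careful than the paper's. The paper's approach is shorter because it treats the \'etale bookkeeping as already absorbed into the existence of $\mathcal{B}_d^n$.
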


\begin{proof} Under the conditions stated above, $\mathcal{B}_d^n \ne \emptyset$. The incidence variety
$$\mathcal{F}:= \{ (z,t) \in \mathcal{B}_d^n \times {{\mathbb{P}^n}}: t\in \alpha(\phi_{L,V}(C)) \ \mbox{if}\  z=(C,(L,V), \alpha : \mathbb{P}(V^*)\to \mathbb{P}^n )\}$$ and the projection $\mathcal{F}\to \mathcal{B}_d^n $
defines a family $\mathcal{F} \subset \mathcal{B}_d^n \times {{\mathbb{P}^n}}$ of curves in $\mathbb{P}^n$, parameterized by $\mathcal{B}_d^n$.
From this, and the definition of $\mathcal{B}_d^n$, it follows that there exists an injective morphism
$$\Gamma: \mathcal{B}_d^n \to Hilb_{P(t)}{{\mathbb{P}^n}},$$
$$z=(C,(L,V), \alpha : \mathbb{P}(V^*)\to \mathbb{P}^n )\longmapsto [\alpha(\phi_{L,V}(C))\subset  \mathbb{P}^n].$$

Theorem \ref{teolow} shows that, for general $(L,V)$,
$\phi_{L,V}(C)\subset  \mathbb{P}^n$ is Chow semistable, hence $Hilb^{P(t),ss}_{Ch}\ne \emptyset$,
Similarly, $Hilb^{P(t),s}_{Ch}\ne \emptyset $ under the conditions of Theorem \ref{teolow}.
From the first part and the definition of $\mathcal{B}_d^n$ we see that
$\Gamma (\mathcal{B}_d^n)\subset Hilb^{P(t),s}_{Ch}$ as claimed.
\end{proof}

This theorem yields information about the structure of $Hilb^{P(t),s}_{Ch}$.

\begin{theorem}\label{teochowprin1} Under the conditions of Theorem \ref{teochow11}
\begin{enumerate}
\item $\dim Hilb^{P(t),s}_{Ch}=
3g-3+\rho(g,d,n+1)+n(n+2),$
\item $Hilb^{P(t),s}_{Ch}$ is smooth at $\Gamma (z),$ for any
$z \in \mathcal{B}_{d}^{n}$ and
\item $\dim Hilb^{P(t),s}_{Ch}/SL(n+1)= \dim \mathcal{P}_d^n =
3g-3+\rho(g,d,n+1)$.
\end{enumerate}
Moreover, if $d-g> n-\left\lfloor\frac{g}{n+1}\right\rfloor ,$ $Hilb^{P(t),s}_{Ch}$ is regular component of the Hilbert scheme $Hilb_{P(t)}{{\mathbb{P}^n}}$
 and $\Gamma :\mathcal{B}_{d}^{n}\longrightarrow  Hilb^{P(t),s}_{Ch}$ is an open immersion when  $g\geq 4$.
\end{theorem}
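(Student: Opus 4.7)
The plan is to study the local structure of $Hilb^{P(t),s}_{Ch}$ at $\Gamma(z)=[C\subset \mathbb{P}^n]$ via the deformation theory of the Hilbert scheme. Recall that $T_{[C]}Hilb_{P(t)}{\mathbb{P}^n}\cong H^0(N_{C/\mathbb{P}^n})$, the obstruction space is $H^1(N_{C/\mathbb{P}^n})$, and $Hilb_{P(t)}{\mathbb{P}^n}$ is smooth at $[C]$ whenever this obstruction vanishes. The normal bundle sequence $0\to T_C\to T\mathbb{P}^n|_C\to N_{C/\mathbb{P}^n}\to 0$ together with Riemann-Roch yields
\[\chi(N_{C/\mathbb{P}^n})=(n+1)d+n(1-g)-(3-3g)=3g-3+\rho(g,d,n+1)+n(n+2),\]
which matches the dimension predicted in (1). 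Hence claims (1), (2), and the regularity of the component all reduce to the single vanishing $H^1(N_{C/\mathbb{P}^n})=0$.

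For this vanishing, the long exact sequence of the tangent-normal sequence shows that $H^1(N_{C/\mathbb{P}^n})$ is a quotient of $H^1(T\mathbb{P}^n|_C)$, since $H^2(T_C)=0$; it therefore suffices to prove $H^1(T\mathbb{P}^n|_C)=0$. Using the identification (\ref{eq0}) and Serre duality,
\[H^1(T\mathbb{P}^n|_C)\;\cong\; H^0\bigl(C,M_{L,V}\otimes L^{-1}\otimes K_C\bigr)^*.\]
Twisting the syzygy sequence (\ref{eq1}) by $L^{-1}\otimes K_C$ and taking global sections realises $H^0(M_{L,V}\otimes L^{-1}\otimes K_C)$ as the kernel of the multiplication map
\[V\otimes H^0(L^{-1}\otimes K_C)\;\longrightarrow\; H^0(K_C),\]
which is the restriction to $V\otimes H^0(L^{-1}\otimes K_C)$ of the Petri map (\ref{eqpetri}). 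Since $C\in \mathcal{P}_d^n$ is Petri, this restriction is injective, so the kernel vanishes and $H^1(T\mathbb{P}^n|_C)=0$.

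Granted this vanishing, claims (1), (2), and the regularity of $Hilb^{P(t),s}_{Ch}$ follow at once. For the open immersion, Corollary \ref{corpetri} gives $\dim\mathcal{B}_d^n=\dim\mathcal{P}_d^n+\dim PGL(n+1)=(3g-3+\rho)+n(n+2)$, which matches $\dim Hilb^{P(t),s}_{Ch}$; combining this with the injectivity of $\Gamma$ (Theorem \ref{teochow11}) and a comparison of the natural filtration on $T_z\mathcal{B}_d^n$ (coming from $\mathcal{B}_d^n\to\mathcal{P}_d^n\to M_g^0$) with the filtration $H^0(T\mathbb{P}^n|_C)\subset H^0(N_{C/\mathbb{P}^n})$ arising from $0\to H^0(T\mathbb{P}^n|_C)\to H^0(N_{C/\mathbb{P}^n})\to H^1(T_C)\to 0$, one checks that $d\Gamma$ is an isomorphism; since $\Gamma$ is injective and étale, it is an open immersion. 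For the quotient in (3), the center of $SL(n+1)$ acts trivially on the Hilbert scheme and $C\in \mathcal{P}_d^n$ has no nontrivial automorphism, so the $SL(n+1)$-action on $\Gamma(z)$ has finite stabilizer, yielding
\[\dim Hilb^{P(t),s}_{Ch}/SL(n+1)=3g-3+\rho+n(n+2)-\bigl((n+1)^2-1\bigr)=3g-3+\rho(g,d,n+1).\]
The principal obstacle is the vanishing $H^1(T\mathbb{P}^n|_C)=0$: since the Brill-Noether bound $d>g+n-\lfloor g/(n+1)\rfloor$ is strictly weaker than the $d>2g-2$ that would yield $H^1(L)=0$ directly, the Petri hypothesis on $C$ is genuinely needed here.
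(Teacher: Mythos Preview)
Your proof is correct and follows essentially the same route as the paper: both reduce smoothness and the dimension count to the vanishing $H^1(T\mathbb{P}^n|_C)=0$, obtain this from the Petri condition via Serre duality and the twisted syzygy sequence, and then read off $h^0(N_{C/\mathbb{P}^n})$ from the normal-bundle sequence using $H^0(T_C)=0$ and $h^1(T_C)=3g-3$. The only minor difference is in the open-immersion step: the paper, after noting injectivity and equality of dimensions, simply invokes \cite[Corollaire (4.4.9)]{grot}, whereas you verify directly that $d\Gamma$ is an isomorphism by matching the filtration on $T_z\mathcal{B}_d^n$ with $H^0(T\mathbb{P}^n|_C)\subset H^0(N_{C/\mathbb{P}^n})$; this is a slight elaboration in exposition rather than a different argument.
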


\begin{proof} From Corollary \ref{corpetri}, $\mathcal{B}_{d}^{n}$ is smooth of dimension $3g-3+\rho(g,d,n+1)+n(n+2)$ and irreducible if $d-g> n-\left\lfloor\frac{g}{n+1}\right\rfloor .$

 We claim that
$Hilb^{P(t),s}_{Ch}$ is smooth at
$\Gamma (z)=[\alpha(\phi _{L,V}(C))\subset \mathbb{P}^n]$ of dimension
$\chi (N_{{C/\mathbb{P}^n}})$, where $N_{{C/\mathbb{P}^n}}$ is the normal bundle of $C$ in $\mathbb{P}^n$.
To see this, it suffices to
show that $h^0(C,N_{{C/\mathbb{P}^n}})= \chi (N_{{C/\mathbb{P}^n}})$ or, equivalently, that
$h^1(C,N_{{C/\mathbb{P}^n}})=0.$

Recall that the normal bundle fits into the following exact sequence
\begin{equation}\label{eqnor}
0\to TC\to T\mathbb P_{|C} ^n \to N_{{C/\mathbb{P}^n}} \to 0
\end{equation}
of vector bundles over $C$.
Let
\begin{equation}\label{eqcoh}
0\to H^0( TC)\to H^0(T\mathbb P_{|C} ^n) \to H^0(N_{{C/\mathbb{P}^n}}) \to H^1( TC)\to H^1(T\mathbb P_{|C} ^n) \to H^1(N_{{C/\mathbb{P}^n}}) \to 0
\end{equation}
be the cohomology sequence of $(\ref{eqnor})$.

Let us first prove that $h^1(C,T\mathbb P_{|C} ^n)=0.$ From $(\ref{eq0})$ we have
$T\mathbb P_{|C} ^n= M_{L,V}^*\otimes L.$  This, and Serre duality, gives
\begin{equation}\label{eqtand}
h^1(C,T\mathbb P_{|C} ^n)=h^0((T\mathbb P_{|C} ^n)^*\otimes K_C) = h^0(M_{L,V}\otimes L^*\otimes K_C).
\end{equation}
Tensor the exact sequence $(\ref{eq1})$ with $L^*\otimes K_C$ to get the exact sequence
\begin{equation}\label{eqtan}
0\to M_{L,V}\otimes L^*\otimes K_C\to V\otimes \mathcal{O}_C\otimes L^*\otimes K_C \to  K_C
\to 0.
\end{equation}
Since $C$ is a Petri curve, $(\ref{eqpetri})$ shows that  $h^0(C,M_{L,V}\otimes L^*\otimes K_C)=0 $.
 Therefore, from $(\ref{eqtand})$, $h^1(C,T\mathbb P_{|C} ^n)=0.$
Hence, from this and  Riemann-Roch formula we deduce that
\begin{equation}\label{eqnor1}
h^0(C,T\mathbb P_{|C} ^n)=d(n+1)+n(1-g)=\rho(g,d,n+1)+n(n+2).
\end{equation}

Let us now compute the cohomology of the normal bundle.  From what has already been proved and the sequence $(\ref{eqcoh})$ we deduce that
$H^1(C,N_{{C/\mathbb{P}^n}})=0,$ and hence $\chi (N_{{C/\mathbb{P}^n}})=h^0(C,N_{{C/\mathbb{P}^n}})$, as claimed.

As $g\geq 3$, we have $H^0(C, TC)=0$ and $h^1(C, TC)=3g-3.$
We conclude from $(\ref{eqcoh})$ and $(\ref{eqnor1})$ that
$$h^0(C,N_{{C/\mathbb{P}^n}})= 3g-3+\rho(g,d,n+1)+n(n+2),$$ hence that
 $$\dim Hilb^{P(t),s}_{Ch}=h^0(C,N_{{C/\mathbb{P}^n}})= 3g-3+\rho(g,d,n+1)+n(n+2)=\dim \mathcal{B}_{d}^{n}$$ and finally that $Hilb^{P(t),s}_{Ch}$ is smooth at $\Gamma (z)$.
 This clearly forces
 $$\dim Hilb^{P(t),s}_{Ch}/SL(n+1)= 3g-3+\rho(g,d,n+1).$$ The morphism $\Gamma :\mathcal{B}_{d}^{n}\longrightarrow  Hilb^{P(t),s}_{Ch}$ is injective. From \cite[Corollaire (4.4.9)]{grot} we conclude that
$\Gamma :\mathcal{B}_{d}^{n}\longrightarrow  Hilb^{P(t),s}_{Ch}$ is an open immersion, if $d-g> n-\left\lfloor\frac{g}{n+1}\right\rfloor$,  which proves the theorem.

\end{proof}

\begin{corollary}\label{corred1} The quotient stack $[\Gamma (\mathcal{B}_{d}^{n})/SL(n+1) ]$ is a smooth irreducible Deligne-Mumford stack of dimension $3g-3+\rho(g,d,n+1)$.
\end{corollary}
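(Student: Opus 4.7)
The proposal is to deduce the corollary directly from the results already established for $\mathcal{B}_d^n$ and $\Gamma$, plus a finite-stabilizer argument for the $SL(n+1)$-action.

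First I would transport the geometric properties of $\mathcal{B}_d^n$ to $\Gamma(\mathcal{B}_d^n)$. By Corollary \ref{corpetri}, $\mathcal{B}_d^n$ is smooth and irreducible of dimension $3g-3+\rho(g,d,n+1)+n(n+2)$, and by Theorem \ref{teochowprin1} the morphism $\Gamma:\mathcal{B}_d^n\to Hilb^{P(t),s}_{Ch}$ is an open immersion. Hence $\Gamma(\mathcal{B}_d^n)$ is smooth and irreducible of the same dimension. The natural $SL(n+1)$-action on $Hilb_{P(t)}\mathbb{P}^n$ preserves $\Gamma(\mathcal{B}_d^n)$, since for any $z=(C,(L,V),\alpha)\in\mathcal{B}_d^n$ and any $\sigma\in SL(n+1)$, $\sigma\cdot\Gamma(z)=\Gamma(C,(L,V),\sigma\circ\alpha)$ still lies in the image.

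Next I would compute stabilizers. Fix $z=(C,(L,V),\alpha)$ with image $\Gamma(z)=[\alpha(\phi_{L,V}(C))]$. If $\sigma\in SL(n+1)$ fixes this point of the Hilbert scheme, then $\sigma$ induces a linear automorphism of $\mathbb{P}^n$ carrying the non-degenerate curve $C':=\alpha(\phi_{L,V}(C))$ to itself, hence an automorphism of $C'\cong C$. Since $C$ is Petri without automorphisms by construction of $\mathcal{P}_d^n$, this automorphism is trivial, so $\sigma$ fixes $C'$ pointwise. Since $C'$ spans $\mathbb{P}^n$, the induced element of $PGL(n+1)$ is the identity, and hence $\sigma$ lies in the center $\mu_{n+1}\subset SL(n+1)$. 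Thus every stabilizer equals $\mu_{n+1}$, in particular is finite and reduced, giving the Deligne--Mumford property (by, e.g., the standard criterion that a quotient stack $[X/G]$ with $X$ a scheme of finite type, $G$ a smooth affine group, and finite unramified stabilizers is Deligne--Mumford).

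Finally I would assemble the conclusions. Smoothness of the quotient stack $[\Gamma(\mathcal{B}_d^n)/SL(n+1)]$ follows from smoothness of $\Gamma(\mathcal{B}_d^n)$ together with smoothness of $SL(n+1)$, since the quotient stack by a smooth group action on a smooth scheme is smooth. Irreducibility follows from irreducibility of $\Gamma(\mathcal{B}_d^n)$. For the dimension, using $\dim SL(n+1)=(n+1)^2-1=n(n+2)$ and the fact that the action has finite stabilizers, one obtains
\[
\dim [\Gamma(\mathcal{B}_d^n)/SL(n+1)] = \dim \Gamma(\mathcal{B}_d^n)-\dim SL(n+1)= 3g-3+\rho(g,d,n+1),
\]
which matches the dimension count in Theorem \ref{teochowprin1}(3).

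The only delicate point is the stabilizer computation, which is why the Petri/automorphism-free hypothesis baked into $\mathcal{P}_d^n$ is essential; once that is in hand the Deligne--Mumford property, smoothness, irreducibility, and dimension all follow formally. Everything else is essentially bookkeeping from the results already proved in the paper.
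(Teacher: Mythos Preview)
Your argument is correct and follows essentially the same route as the paper: use the smoothness and irreducibility of $\mathcal{B}_d^n$ (equivalently of its image under the open immersion $\Gamma$) established in Theorem~\ref{teochowprin1}, together with the dimension count, to read off the asserted properties of the quotient stack. The paper's own proof is a one-line sketch (``$\mathcal{B}_d^n$ is an atlas''), so your explicit stabilizer computation via the automorphism-free hypothesis on Petri curves in $\mathcal{P}_d^n$ actually supplies the key step for the Deligne--Mumford property that the paper leaves implicit.
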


 \begin{proof} It is clear that $\mathcal{B}_{d}^{n}$ is an atlas and, according to the results of the previous theorem, it is a smooth and irreducible scheme, and this is precisely the assertion of the corollary.
 \end{proof}

We finish this section recalling that Morrison in \cite[Corollary 3.5 (i)]{mor} (see also \cite{mab}),
proved that Chow stability implies Hilbert stability (see e.g. \cite{geis}).

Denote by $Hilb^{P(t),s}_{Hilb}$  the irreducible component of the
Hilbert scheme $Hilb^{P(t)}_{{\mathbb{P}^n}}$
containing the generic Hilbert stable curve. From what has already been proved, for a complex
irreducible smooth curve $C \subset \mathbb{P}^n$  we deduce that
\begin{itemize}
\item if  $T\mathbb P_{|C} ^n$
 is stable then $C\subset
\mathbb P ^n$ is Hilbert stable.
\item If  $T\mathbb P_{|C} ^n$
is irreducible and admits an Hermitian-Einstein metric then $C\subset
\mathbb P ^n$ is Hilbert stable.
\item  Under the conditions of Theorem \ref{teolow}
\begin{enumerate}
\item $Hilb^{P(t),s}_{Hilb}\ne \emptyset.$
\item  $\Gamma (\mathcal{B}_d^n)\subset Hilb^{P(t),s}_{Hilb}$.
\item  $Hilb^{P(t),s}_{Hilb}$ has dimension $3g-3+\rho(g,d,n+1)+n(n+2)$ and  is smooth at $\Gamma (z)$ for any
$z \in \mathcal{B}_{d}^{n}.$
\item $\dim Hilb^{P(t),s}_{Hilb}/SL(n+1)= \dim \mathcal{P}_d^n =
3g-3+\rho(g,d,n+1)$.
\end{enumerate}
\end{itemize}



\begin{thebibliography}{999}


\bibitem{arb2}
{ E. ARBARELLO, M. CORNALBA, \and P.A. GRIFFITHS},
\lq Geometry of algebraic curves, With a
contribution by Joseph Daniel Harris.\rq,
{\em Volume II, volume 268 of Grundlehren der
Mathematischen Wissenschaften, Springer, Heidelberg}, (2011).


\bibitem{alex}
{ V. ALEXEEV},
 \lq Higher-dimensional analogues of stable curves\rq,
\emph{Proceedings of Madrid ICM, European Math. Soc. Publ. House}, (2006), 515-536.

\bibitem{jarod}
 { J. ALPER \and D. HYEON},
  \lq GIT construction of log canonical models of $\bar {M_g}$\rq,
 \emph{Compact moduli spaces and vector bundles,   Contemp. Math.,}  Amer. Math. Soc.,564, (2012), 87–-106.





\bibitem{bstop}
{ M. A. BARJA \and L. STOPPINO},
\lq Stability conditions and positivity of invariants of fibrations\rq,
\emph{Algebraic and complex geometry, Springer Proc. Math. Stat.,} 71, (2014), 1–-40.


\bibitem{ber}
{ ROBERT J. BERMAN}, \lq K-polystability of Q-Fano varieties admitting Kahler-Einstein metrics\rq,
{\em  Inventiones mathematicae, springer Berlin Heidelberg}, (2015), 1-53.


%
\bibitem{bbn1}
{ U.N. BHOSLE, L. BRAMBILA-PAZ \and P.E. NEWSTEAD},
 \lq On coherent systems of type $(n, d, n + 1)$ on Petri curves\rq,
{\em Manuscripta Math.} 126 (2008), 409--441.


\bibitem{bbn}
{ U.N. BHOSLE, L. BRAMBILA-PAZ \and P.E. NEWSTEAD},
\lq On linear systems and a conjecture of D. C. Butler\rq,
\emph{Internat. J. Math.} 26, no. 2, 1550007  (2015).


\bibitem{mio}
{ L. BRAMBILA-PAZ},
 \lq Non-emptiness of moduli spaces of coherent systems\rq,
 {\em Internat. J. Math.} 19, no. 7, (2008), 779--799.

\bibitem{la}
{ L. BRAMBILA-PAZ \and ANGELA ORTEGA},
\lq Brill-Noether bundles and coherent systems on special curves, Moduli spaces and vector bundles\rq,
{\em London Math. Soc. Lecture Note Ser.} 359, Cambridge Univ. Press, Cambridge, (2009), 456–-472.

\bibitem{bu}
{ D. C. BUTLER},
 \lq Normal generation of vector bundles over a curve\rq,
{\em J. Differential Geom.} 39, no. 1, (1994), 1--34.

\bibitem{but}
{ D. C. BUTLER},
 \lq Birational maps of
moduli of Brill-Noether pairs, preprint, arXiv:alg-geom/9705009.

\bibitem{came}
{ C. CAMERE},
\lq About the stability of the tangent bundle of $\mathbb{P}^n$ restricted to a curve\rq,
{\em C. R. Math. Acad. Sci. Paris, Ser} I 346, (2008), 421-–426.



\bibitem{ein}
{ L. EIN \and R. LAZARSFELD},
\lq Stability and restrictions of Picard bundles, with an application to the normal bundles of elliptic curves\rq,
 in Complex projective geometry (Trieste, 1989/Bergen, 1989),
 {\em London Math. Soc. Lecture Note Ser.,} 179, Cambridge Univ. Press, Cambridge, (1992), 149--156.


\bibitem{geis}
{ D. GIESEKER},
 \lq Lectures on moduli of curves\rq,
 {\em Tata Institute of Fundamental Research Lectures
on Mathematics and Physics}, Volume 69, Bombay, (1982).


\bibitem{grot}
{GROTHENDIECK, ALEXANDRE},
\lq \'El\'ements de g\'eom\'etrie alg\'ebrique
(r\'edig\'es avec la collaboration de Jean Dieudonn\'e) :
III. \'Etude cohomologique des faisceaux coh\'erents, Premi\'ere partie, \rq
Publications Mathematiques de l'IH\'ES 11, (1961), 5--167.

\bibitem{melo}
{ GILBERTO BINI, FABIO FELICI, MARGARIDA MELO \and FILIPPO VIVIANI},
\lq GIT for polarized curves\rq,
{\em  Lecture Notes in Mathematics, 2122. Springer}, (2014).

\bibitem{hm}
{ J. HARRIS \and  I. MORRISON},
\lq Moduli of curves\rq,
 {\em Springer, Graduate Texts in Mathematics}, v. 187, (1998).

\bibitem{hh}
{ B. HASSETT and D. HYEON},
\lq Log minimal model program for the moduli space of curves:
The First flip\rq
{\em Ann. of Math.} (2),  177  (2013),  no. 3, 911–-968.


\bibitem{ko1}
{ J. KOLL\"AR},
\lq Moduli of varieties of general type\rq,
{\em  Handbook of moduli.
Adv. Lect. Math. (ALM), 25, Int. Press, Somerville, MA,} Vol. II, (2013),  131–-157.

\bibitem{ln}
{ H. LANGE \and P. E. NEWSTEAD},
 \lq Coherent systems on elliptic curves\rq,
  {\em Internat. J. Math.} 16 (2005), 787–--805.

\bibitem{mab}
{ T. MABUCHI},
 \lq Chow-stability and Hilbert-stability in Mumford's Geometric Invariant Theory\rq,
  {\em Osaka J. Math.} 45, (2008), 833--846.

\bibitem{mis}
{ E. C. MISTRETTA},
\lq Stability of line bundle transforms on curves with respect to low codimensional subspaces\rq,
{\em J. Lond. Math. Soc.} (2) 78, no. 1, (2008), 172–--182.


\bibitem{ms}
{ E. C. MISTRETTA \and L. STOPPINO},
 \lq Linear series on curves: stability and Clifford index\rq,
 {\em Internat. J. Math.} 23, no. 12, (2012).

\bibitem{mor}
{ I. MORRISON},
 \lq Projective Stability of Ruled Surfaces\rq,
 {\em Invent. Math.} 56, (1980), 269--304.

\bibitem{mum}
{ D. MUMFORD},
 \lq Stability of projective varieties\rq,
 {\em Enseignement Math.} (2) 23, (1977), 39--110.

\bibitem{oda}
{ Y. ODAKA},
 \lq On the GIT stability of polarized varieties – a survey\rq,
{\em Proceedings of Kinosaki Algebraic Geometry symposium (Oct, 2010).}

\bibitem{ram}
{ K.~PARANJAPE \and S.~RAMANAN},
\lq On the canonical ring of a curve\rq,
{\it Algebraic geometry and Commutative Algebra in Honor of Masayoshi Nagata}, (1987), 503--516.


\bibitem{sh}
{ O. SCHNEIDER},
\lq Stabilit\'e des fibr\'es $\wedge^p E_L$ et condition de Raynaud\rq,
{\em Ann. Fac. Sci. Toulouse Math.} (6) 14, no. 3, (2005), 515--525.


\bibitem{tian}
{ TIAN, G},
\lq K¨ahler-Einstein metrics with positive scalar curvature. \rq,
{\em Invent. Math., } 130 (1997), 1--39.



\bibitem{hugo}
{ H. TORRES-L\'OPEZ},
 \lq Chow stability on curves \rq  {\em Ph. D. Thesis,} CIMAT, Gto, Mexico  2015.

\bibitem{rich}
{ J. ROSS \and R. THOMAS},
\lq A study of the Hilbert Mumford criterion for the stability of projective varieties\rq,
{\em J. Algebraic Geom.} 16, (2007), 201--–255.

\bibitem{donsun}
{ XIU-XIONG CHEN, SIMON DONALDSON \and SONG SUN},
 \lq K\"ahler-Einstein metrics and stability\rq,
  {\em Int. Math. Res. Not. IMRN,  no. 8 }, (2014), 2119–-2125.


\end{thebibliography}
\end{document}